\title{Limit theory for the empirical extremogram of random fields}
\author{Sven Buhl
\thanks{Center for Mathematical Sciences, Technical University of Munich, 85748 Garching, Germany, http://www.statistics.ma.tum.de, Email: sven.buhl@tum.de, cklu@tum.de}
\and Claudia Kl\"uppelberg\footnotemark[1]
}
\newlength{\dtildeheight}
\numberwithin{equation}{section}
\newtheorem{theorem}{Theorem}[section]
\newtheorem{lemma}[theorem]{Lemma}
\newtheorem{proposition}[theorem]{Proposition}
\newtheorem{definition}[theorem]{Definition}
\newtheorem{corollary}[theorem]{Corollary}
\newtheorem{fig}[theorem]{Figure}
\newtheorem{example}[theorem]{Example}
\newtheorem{remark}[theorem]{Remark}
\newcommand{\trans}{{}^{^{\intercal}}}
\newcommand{\bthe}{\begin{theorem}}
\newcommand{\ethe}{\end{theorem}}
\newcommand{\ben}{\begin{enumerate}}
\newcommand{\een}{\end{enumerate}}
\newcommand{\beq}{\begin{equation}}
\newcommand{\eeq}{\end{equation}}
\newcommand{\ble}{\begin{lemma}}
\newcommand{\ele}{\end{lemma}}
\newcommand{\bde}{\begin{definition}}
\newcommand{\ede}{\end{definition}}
\newcommand{\bco}{\begin{corollary}}
\newcommand{\eco}{\end{corollary}}
\newcommand{\bpr}{\begin{proposition}}
\newcommand{\epr}{\end{proposition}}
\newcommand{\bproof}{\begin{proof}}
\newcommand{\eproof}{\end{proof}}
\newcommand{\bexam}{\begin{example}\rm}
\newcommand{\eexam}{\halmos\end{example}}
\newcommand{\brem}{\begin{remark}\rm}
\newcommand{\erem}{\halmos\end{remark}}
\newcommand{\bfi}{\begin{fig}}
\newcommand{\efi}{\end{fig}}
\newcommand{\btab}{\begin{tab}}
\newcommand{\etab}{\end{tab}}
\newcommand{\beao}{\begin{eqnarray*}}
\newcommand{\eeao}{\end{eqnarray*}\noindent}
\newcommand{\beam}{\begin{eqnarray}}
\newcommand{\eeam}{\end{eqnarray}\noindent}
\newcommand{\barr}{\begin{array}}
\newcommand{\earr}{\end{array}}
\newcommand{\bdis}{\begin{displaymath}}
\newcommand{\edis}{\end{displaymath}\noindent}
\newcommand{\bs}{\boldsymbol}
\newcommand{\bbn}{\mathbb{N}}
\newcommand{\bbz}{\mathbb{Z}}
\newcommand{\bbr}{\mathbb{R}}
\newcommand{\N}{\mathbb{N}}
\newcommand{\Z}{\mathbb{Z}}
\newcommand{\R}{\mathbb{R}}
\newcommand{\E}{\mathbb{E}}
\def\cals{{\mathcal{S}}}
\def\calh{{\mathcal{H}}}
\newcommand{\stp}{\stackrel{P}{\rightarrow}}
\newcommand{\std}{\stackrel{d}{\rightarrow}}
\newcommand{\stv}{\stackrel{v}{\rightarrow}}
\newcommand{\nto}{n\to\infty}
\newcommand{\kto}{k\to\infty}
\newcommand{\im}{\mathrm{i}}
\newcommand{\ga}{{\gamma}}
\newcommand{\si}{{\sigma}}
\newcommand{\eps}{\varepsilon}
\newcommand{\var}{\mathbb{V}{\rm ar}}
\newcommand{\cov}{\mathbb{C}{\rm ov}}
\newcommand{\ov}{\overline}
\newcommand{\wh}{\widehat}
\newcommand{\one}{\mathbbmss{1}_}
\newcommand{\halmos}{\quad\hfill\mbox{$\Box$}}  
\begin{document}

\maketitle

\begin{abstract}
Regularly varying stochastic processes are able to model extremal dependence between process values at locations in random fields. 
We investigate the empirical extremogram as an estimator of dependence in the extremes.
We provide conditions to ensure asymptotic normality of the empirical extremogram centred by a pre-asymptotic version.
For max-stable processes with Fr{\'e}chet margins we provide conditions such that the empirical extremogram centred by its true version is asymptotically normal.
The results of this paper apply to a variety of spatial and space-time processes, and to time series models.
We apply our results to max-moving average processes and Brown-Resnick processes.
\end{abstract}

\noindent
\begin{tabbing}
{\em AMS 2010 Subject Classifications:} \= primary:\,\,\, \, \,\,
60F05, 
60G70, 
62G32 
\\
\> secondary: \,\,\,
37A25, 
62M30 
\end{tabbing}


\noindent
{\em Keywords:}
Brown-Resnick process, empirical extremogram, extremogram, max-moving average process, max-stable process, random field, spatial CLT, spatial mixing.

\vspace{0.5cm}
\section{Introduction}\label{Introduction}
 
The extremogram measures extremal dependence in a strictly stationary regularly varying stochastic process and can hence be seen as a correlogram for extreme events.
It was introduced in \citet{Davis2} for time series (also in \citet{Fasen}), and they show consistency and asymptotic normality of an empirical extremogram  under weak mixing conditions.
 \citet{DMZ} give a profound review of the  estimation theory for time series with various examples.
 For a discussion of the role of the extremogram in dependence modelling of extremes we refer again to \cite{Davis2}.
As it is spelt out there, it is the covariance function of indicator functions of exceedance events in an asymptotic sense.
Also in that paper classical mixing conditions as presented in \citet{Ibragimov}, on which we rely in our work, are compared to the extreme mixing conditions $D$ and $D'$ often used in extreme value theory (cf. \citet{EKM}, Section~4.4, and \citet{Leadbetter}, Sections~3.1 and~3.2).
 
The extremogram and its empirical estimate have been formulated for spatial $d$-dimensional stochastic processes by \citet{Cho} and for space-time processes in \citet{Steinkohl3} and \citet{steinkohlphd}, when observed on a regular grid.
The  extremogram is defined for strictly stationary regularly varying stochastic processes, where all finite-dimensional distributions are in the maximum domain of attraction of some Fr\'echet distribution.
Among other results, based on the seminal paper \cite{Bolthausen} by Bolthausen, \cite{Cho} prove a CLT for the empirical extremogram sampled at different spatial lags, centred by the so-called pre-asymptotic extremogram. Such results also compare with a CLT for sample space-time covariance estimators derived in \citet{Genton}, also based on \cite{Bolthausen}.
 
The pre-asymptotic extremogram can be replaced in the CLT by the true one, if a certain bias condition is satisfied; in particular, the difference between the pre-asymptotic and the true extremogram must vanish with the same rate as the one given in the CLT.
However, for many processes the assumptions required in \cite{Cho} are too restrictive to satisfy this bias condition. 
We explain this in detail and present two models which exactly fall into this class; the max-moving average process and the Brown-Resnick process. 
These two processes are max-stable with Fr\'echet margins.

In this paper, we prove a CLT for the empirical extremogram centred by the pre-asymptotic extremogram for strictly stationary regularly varying stochastic processes, which relies on weaker conditions than the CLT stated in \cite{Cho}. 
Our proof also partly relies on Bolthausen's CLT for spatial processes in \cite{Bolthausen}; however, we make important modifications so that the bias condition mentioned above can be satisfied, and thus a CLT for the empirical extremogram centred by the true one for many more processes becomes possible. 
The proof is based on a big block/small block argument, similarly to \cite{Davis2}.

Our interest is of course in a CLT centred by the true extremogram, and whether such a CLT is possible depends on the specific regularly varying process.
If the process has finite-dimensional max-stable distributions, in our framework equivalent to having finite-dimensional Fr\'echet distributions, we can give conditions such that a CLT of that kind is possible. 
Here we need the weaker mixing conditions of our version of Bolthausen's CLT compared to \cite{Cho}.
Furthermore, under conditions such that a CLT centred by the true extremogram is not possible, a bias-corrected estimator can be defined, which we do in the accompanying paper  \citet{Steinkohl3} for the Brown-Resnick process.
 
Our paper is organised as follows.
In Section~2 we present the general model class of strictly stationary regularly varying processes in $\R^d$ for $d\in\N$.
We also define here the extremogram for such processes.
In Section~3 we define the empirical extremogram based on grid observations, and also the pre-asymptotic extremogram.
Section 4 is devoted to the CLT for the empirical extremogram centred by the pre-asymptotic extremogram and to our examples of max-stable spatial processes; max-moving average processes and Brown-Resnick processes. 
We discuss in detail the problem of a CLT for the empirical extremogram and compare our new conditions for the CLT to hold with those in previous work, particularly with those given in \citet{Cho}. 
For processes with Fr\'echet margins we prove a CLT for the empirical extremogram centred by the true extremogram.
The proof of the CLT is given in Section 5.

\section{Regularly varying spatial processes} \label{S2}

As a natural model class in extreme value theory we consider strictly stationary regularly varying processes $\{X(\bs s): \bs s \in \mathbb{R}^d\}$ for $d\in\N$, where all finite-dimensional distributions are regularly varying (cf. \citet{HL} for definitions and results in a general framework and \citet{Resnick3} for details about multivariate regular variation).
As a prerequisite, we define for every finite set $\mathcal{I} \subset \mathbb{R}^d$ 
the vector 
$$X_{\mathcal{I}}:= (X(\bs s) : \bs s\in \mathcal{I})\trans.$$
Throughout we assume that $X_{\mathcal I}$ inherits the strict stationarity from $\{X(\bs{s}): \bs{s} \in \mathbb{R}^d\}$, which is guaranteed, if we consider lagged vectors of $X_{\mathcal I}$.
Furthermore, $|\mathcal{I}|$ denotes the cardinality of $\mathcal{I}$. 
As usual, $f(n)\sim g(n)$ as $\nto$ means that $\lim_{\nto}\frac{f(n)}{g(n)} =1$.

\begin{definition}[Regularly varying process]
A strictly stationary stochastic process $\{X(\bs{s}): \bs{s} \in \mathbb{R}^d\}$ is called \textnormal{regularly varying}, if there exists some normalising sequence $0<a_n \rightarrow \infty$ such that $\mathbb{P}(|X(\bs 0)|>a_n) \sim n^{-d}$ as $n \rightarrow \infty$
and for every  finite  set $\mathcal{I} \subset \mathbb{R}^d$ 
\begin{align} \label{regvar}
n^d\mathbb{P}\Big(\frac{X_{\mathcal{I}}}{a_n} \in \cdot\Big) \stv \mu_{\mathcal{I}}(\cdot), \quad n \rightarrow \infty,
\end{align}
for some non-null  Radon measure $\mu_{\mathcal{I}}$ on the Borel sets in $\overline{\mathbb{R}}^{|\mathcal{I}|}\backslash\{\bs 0\}$, where $\ov{\mathbb{R}}=\mathbb{R} \cup \{-\infty,\infty\}$. In that case, 
$$\mu_{\mathcal{I}}(xC)=x^{-\beta}\mu_{\mathcal{I}}(C), \quad x>0,$$ 
for every Borel set $C\subset\overline{\mathbb{R}}^{|\mathcal{I}|}\backslash\{\bs 0\}$. 
The notation $\stv$ stands for vague convergence, and $\beta>0$ is called the {\em index of regular variation}.
\end{definition}

For every $\bs s \in \mathbb{R}^d$ and $\mathcal{I}=\{\bs s\}$ we set $\mu_{\{\bs s\}}(\cdot)=\mu_{\{\bs 0\}}(\cdot)=:\mu(\cdot),$ which is justified by stationarity.

The focus of the present paper is on the extremogram, defined for values in $\R^d$ as follows.

\begin{definition}[Extremogram] \label{DefExtremo}
Let $\{X(\bs{s}): \bs{s} \in \mathbb{R}^d\}$ be a strictly stationary regularly varying process in $\R^d$. For two $\mu$-continuous Borel sets $A$ and $B$  in $\overline{\mathbb{R}}\backslash\{0\}$ (i.e., $\mu(\partial A)=\mu(\partial B)=0$) such that $\mu(A) >0$, the \textnormal{extremogram} is defined as
\begin{align} \label{extremo}
\rho_{AB}(\bs{h})=\lim_{n \rightarrow \infty} \frac{\mathbb{P}(X(\bs{0})/a_n \in A,X(\bs{h})/a_n \in B)}{\mathbb{P}(X(\bs{0})/a_n \in A)}, \quad \bs{h} \in \mathbb{R}^d.
\end{align} 
\end{definition}

Our goal is to estimate the extremogram for arbitrary strictly stationary regularly varying processes by its empirical version and prove asymptotic properties like consistency and asymptotic normality.
Such results also allow for semiparametric estimation in a parametric spatial or space-time model as presented in \citet{Steinkohl3}. 

Analogous asymptotic results for the empirical extremogram of time series have been shown in \citet{Davis2} and of $d$-dimensional random fields in \citet{Cho}. 
However, in certain situations, for example in the case of the Brown-Resnick process~\eqref{limitfield}, the rates obtained in \cite{Cho} are too crude to allow for a CLT. 
We apply a small block/large block argument in space (similarly to \cite{Davis2} for time series), which leads to more precise rates in the CLT. 
Arguments of our proof are based on spatial mixing conditions, and rely on general results of \citet{Bolthausen} and \citet{Ibragimov}.

\section{Large sample properties of the spatial empirical extremogram}\label{S3}

The estimation of the extremogram is based on data observed on 
$$\cals_n =\left\{1,\ldots,n\right\}^d = \{\bs{s}_j: j=1,\dots,n^d\}, \mbox{ the regular grid of side length $n$.}$$
Let $\|\cdot\|$ be an arbitrary norm on $\R^d$.
Define the following quantities for $\ga >0$:
\begin{align}
&B(\bs{0},\ga) = \big\{\bs{s}\in \mathbb{Z}^d : \|\bs s\| \leq \ga\big\},  \nonumber\\
&B(\bs{s},\ga) = \big\{\bs{s'}\in \mathbb{Z}^d : \|\bs s-\bs s'\| \leq \ga\big\}=\bs s + B(\bs 0,\ga), \label{notapp}\\
&\mathcal{H} \subseteq \{\bs h=\bs s-\bs s': \bs s, \bs s' \in \cals_n\}\cap B(\bs 0,\ga), \,\text{a finite set of observed lags.}\label{lags}
\end{align}
We further define the vectorized process $\{\bs{Y}(\bs{s}): \bs{s}\in\bbr^d\}$ by 
$$\bs{Y}(\bs s):=X_{B(\bs s,\gamma)};$$
 i.e.,  $\bs Y(\bs s)$ is the vector of values of $X$ with indices in $B(\bs s,\ga)$ as defined in \eqref{notapp}. 
 We introduce the balls $B(\bs 0,\ga)$ in order to express events like $\{X(\bs s) \in A, X(\bs s+\bs h) \in B\}$ or $\{X(\bs s) \in A\}$ for $\bs s \in \bbr^d$ and $\bs h \in \mathcal{H} \subseteq B(\bs 0,\ga)$ as well as Borel sets $A, B$ in $\ov{\bbr} \backslash\{0\}$ through events $\{\bs Y(\bs s) \in C\}$ for appropriately chosen Borel sets $C$ in $\ov{\bbr}^{|B(\bs 0,\ga)|} \backslash\{\bs 0\}$. 
 This notation simplifies the presentation of the proofs of consistency and asymptotic normality considerably.

For $j \in \mathbb{N}$ let $\bs e_j$ be the $j$-th unit vector in $\mathbb{R}^d$. 
The choice of a regular grid $\cals_n$ can be extended to arbitrary observation sets provided that 
they increase to $\Z^d$ and have boundaries {$\partial\cals_n:=\{\bs s \in \cals_n: \exists \, \bs z \in \mathbb{Z}^d\setminus \cals_n \mbox{ and } j \in \mathbb{N} \text{ with } \|\bs z-\bs s\| =\|\bs e_j\| \}$} satisfying $\lim_{\nto} |\partial\cals_n |/|\cals_n| = 0$. 
The natural extension to grids with different side lengths does not involve any additional mathematical difficulty, but notational complexity, since our proofs are based on big/small block arguments common in extreme value statistics, which are much simpler to formulate for a regular grid.

For fixed $n$ and observations on the grid $\cals_n$ there will be points $\bs s \in \cals_n$ near the boundary, such that not all components of $\bs Y(\bs s)$ can be observed.
However,  since we investigate asymptotic properties for $\cals_n$ and the boundary points become negligible, this is irrelevant for our results and we suppress such technical details.
As a consequence, our results apply to spatial as well as time series observations, thus include the frameworks considered in \citet{Cho} and \citet{Davis2}.
 
We shall also need the following relations and definitions, where the limits exist by regular variation of $\{X(\bs s): \bs s \in \mathbb{R}^d\}$. 
Let $C$ be a $\mu_{B(\bs 0, \ga)}$-continuous Borel set in $\ov\R^{|B(\bs 0,\ga)|}\backslash\{\bs 0\}$ and $C\times D$ a $\tau_{{B(\bs 0,\gamma) \times B(\bs h,\gamma)}}$-continuous Borel set in the product space, where we define
\beam
\mu_{B(\bs 0, \ga)}(C)  &:=& \lim_{\nto} n^d\mathbb{P}\Big(\frac{\bs Y(\bs 0)}{a_{n}} \in C\Big)\label{B6},\\
\tau_{{B(\bs 0,\gamma) \times B(\bs h,\gamma)}} (C \times D) 
&:=& \lim_{\nto} n^d\mathbb{P}\Big(\frac{\bs Y(\bs 0)}{a_{n}} \in C, \frac{\bs Y(\bs h)}{a_{n}} \in D\Big).\label{B7}
\eeam
We enumerate the lags in $\mathcal{H}$ by $\mathcal{H}=\{\bs h_1,\ldots, \bs h_p\}.$
Following ideas of \citet{Davis2} (also used in \citet{Cho}) we define {$\mu_{B(\bs 0, \ga)}$-continuous Borel sets $D_1,\ldots,D_{p},D_{p+1}$ in $\ov\R^{|B(\bs 0,\ga)|}\backslash\{\bs 0\}$} by the property 
\beam\label{Di}
\{\bs Y(\bs s)\in D_i\} = \{X(\bs{s})\in A,X(\bs{s}+\bs h_i)\in B\}
\eeam
for $i=1,\ldots,p$, and $\{\bs Y(\bs s)\in D_{p+1}\} = \{X(\bs s)\in A\}$.
Note in particular that,  by the relation between $\{\bs Y(\bs s): \bs s \in\R^d\}$ and $\{X(\bs s): \bs s \in\R^d\}$ and  regular variation, for every $\mu$-continuous Borel set $A$ in $\ov\R\setminus\{0\}$,
$$\mu_{B(\bs 0,\ga)}(D_{p+1})=\lim_{\nto} n^d \mathbb{P}\Big(\frac{{\bs Y} (\bs 0)}{a_n} \in D_{p+1}\Big)
=\lim_{\nto} n^d \mathbb{P}\Big(\frac{X(\bs 0)}{a_n} \in A\Big)=\mu(A).$$

The  extremogram can be estimated from data by the following empirical version.

\begin{definition}[Empirical extremogram] 
Let $\{X(\bs{s}): \bs{s} \in \mathbb{R}^d\}$ be a strictly stationary regularly varying  process in $\R^d$, observed on $\cals_n$, and set {$\cals_n(\bs h):=\{\bs s \in \cals_n: \bs s+\bs h \in \cals_n\}$} for $\bs h \in \mathcal{H}$. 
Let $A$ and $B$ be $\mu$-continuous Borel sets  in $\overline{\mathbb{R}}\backslash\{0\}$ such that $\mu(A)>0$. 
For a sequence $m=m_n \rightarrow \infty$ and $m_n=o(n)$ as $\nto$, the \textnormal{empirical extremogram} is defined {for $\bs h\in\calh$} as 
\begin{align} \label{EmpEst}
\widehat{\rho}_{AB,m_n}(\bs h):=\frac{\dfrac1{|\cals_n(\bs h)|}\sum\limits_{\bs s \in \cals_n(\bs h)} \mathbbmss{1}_{\{X(\bs s)/a_m \in A, X(\bs s+\bs h)/a_m \in B\}}}
{\dfrac1{|\cals_n|}\sum\limits_{\bs s \in \cals_n} \mathbbmss{1}_{\{ X(\bs s)/a_m \in A\}}}.
\end{align}
\end{definition}

The following pre-asymptotic extremogram plays an important role when proving asymptotic normality of the empirical  extremogram~\eqref{EmpEst}.

\begin{definition}[Pre-asymptotic extremogram]
Let $\{X(\bs{s}): \bs{s} \in \mathbb{R}^d\}$ be a strictly stationary regularly varying process in $\R^d$. 
Let $A$ and $B$ be $\mu$-continuous Borel sets  in $\overline{\mathbb{R}}\backslash\{0\}$ such that $\mu(A)>0$. 
For a sequence  $m=m_n \rightarrow \infty$ and $m_n=o(n)$ as $\nto$, the \textnormal{pre-asymptotic extremogram} is defined as 
\begin{equation}\label{preasymptotic}
\rho_{AB,m_n}(\bs h) = \frac{P\left(X(\bs{0})/a_m \in A, X(\bs{h})/a_m\in B\right)}{\mathbb{P}(X(\bs{0})/a_m\in A)}.
\end{equation}
\end{definition}

The next section is devoted to the asymptotic properties of the empirical extremogram and the inherent bias-variance problem with its solution.

\section{Consistency and CLT for the empirical extremogram}

In this section we derive relevant asymptotic properties of the empirical extremogram.
First we establish large sample properties of the empirical estimator of $\mu_{B(\bs 0,\ga)}(C)$.
Based on these results, the asymptotic normality is established.

Throughout this section we assume that $\{X(\bs s): \bs s \in \mathbb{R}^d\}$ is a strictly stationary regularly varying  process in $\R^d$, observed on $\cals_n.$

We need the concept of $\alpha$-mixing for such processes; see e.g.  \citet{Bolthausen} or \citet{Doukhan}.

\begin{definition}[$\alpha$-mixing coefficients and $\alpha$-mixing] \label{mixing}
Consider a strictly stationary random field $\left\{X(\bs{s}): \bs{s}\in \bbr^d\right\}$ and let $d(\cdot,\cdot)$ be some metric induced by a norm $\|\cdot\|$ on $\mathbb{R}^d$.
For $\Lambda_1, \Lambda_2 \subset \mathbb{Z}^d$ set 
\begin{align*}
d(\Lambda_1,\Lambda_2) := \inf\left\{\|\bs{s}_1-\bs{s}_2\|: \ \bs{s}_1 \in \Lambda_1, \bs{s}_2 \in \Lambda_2\right\}.
\end{align*}
Further, for $i=1,2$ denote $\mathcal{F}_{\Lambda_i}= \sigma\left\{X(\bs{s}):  \bs{s}\in \Lambda_i\right\}$ the $\sigma$-algebra generated by $\{X(\bs{s}): \ \bs{s}\in \Lambda_i\}$.
\begin{enumerate}[(i)]
\item 
The {\em $\alpha$-mixing coefficients} are defined for $k,\ell \in \mathbb{N} \cup \{\infty\}$ and $r \geq 0$ by
\begin{align}\label{alphaBolt}
\alpha_{k,\ell}(r) := & \sup\{|\mathbb{P}(A_1\cap A_2) - \mathbb{P}(A_1)\mathbb{P}(A_2)|: \nonumber\\
 & \quad\quad A_i \in \mathcal{F}_{\Lambda_i}, |\Lambda_1|\leq k, |\Lambda_2|\leq \ell, d(\Lambda_1,\Lambda_2) \geq r\}.
\end{align}
\item 
The random field is called {\em $\alpha$-mixing}, if $\alpha_{k,l}(r) \to 0$ as $r\to\infty$ for all $k,\ell \in \bbn$.
\end{enumerate}
\end{definition}

In what follows we have to control the dependence between vector processes $\{\bs Y(\bs s)=X_{B(\bs s,\ga)}: s \in \Lambda_1'\}$ and $\{\bs Y(\bs s)=X_{B(\bs s,\ga)}: \bs s \in \Lambda_2'\}$ for subsets $\Lambda_i' \subset \mathbb{Z}^d$ with cardinalities $|\Lambda_1'| \leq k$ and $|\Lambda_2'| \leq \ell.$ 
In the context of Definition~\ref{mixing}, this means that the $\Lambda_i$ in \eqref{alphaBolt} are unions of balls $\Lambda_i=\cup_{\bs s \in \Lambda_i'} B(\bs s,\ga)$. 
Since $\ga>0$ is some predetermined finite constant independent of $n$, we keep notation simple by redefining the $\alpha$-mixing coefficients with respect to the vector processes as
\begin{align}\label{alpha_balls}
\alpha_{k,\ell}(r) := & \sup\{|\mathbb{P}(A_1\cap A_2) - \mathbb{P}(A_1)\mathbb{P}(A_2)|: \nonumber\\
& \quad\quad A_i \in \mathcal{F}_{\Lambda_i},\,\,\Lambda_i=\cup_{\bs s \in \Lambda_i'} B(\bs s,\ga),\,\, |\Lambda_1'|\leq k, |\Lambda_2'|\leq \ell, d(\Lambda_1',\Lambda_2') \geq r\}.
\end{align}

\bthe\label{stasyn}
Let $\{X(\bs s): \bs s \in \mathbb{R}^d\}$ be a strictly stationary regularly varying  process, observed on $\cals_n$ and let $\mathcal{H}=\{\bs h_1,\ldots, \bs h_p\}$ be a finite set of lags in $\mathbb{Z}^d$ satisfying $\mathcal{H} \subseteq B(\bs 0, \ga)$ for some $\ga>0$.
Suppose that the following conditions are satisfied: 
\begin{enumerate}[(M1)]
\item 
$\{X(\bs{s}): \bs{s}\in\bbr^d\}$ is $\alpha$-mixing with $\alpha$-mixing coefficients $\alpha_{k,\ell}(r)$ defined in \eqref{alphaBolt}.
\end{enumerate}
There exist sequences $m=m_n, r=r_\nto $ with $m_n/n \to 0$ and $r_n/m_n \to 0$ as $\nto$ such that the following hold:
\begin{enumerate}[(M1)]
\setcounter{enumi}{1}
\item 
$m_n^{2}r_n^{2}/n \to 0$.
\label{cond0}
\item 
\label{cond1}
For all $\epsilon>0$: 
\begin{align*}
&\lim\limits_{\kto } \limsup\limits_{\nto }
\sum\limits_{\bs h \in \mathbb{Z}^{d} :  k< \Vert{\bs h}\Vert \leq r_n}
 m_n^d \\
& \quad \quad\quad  \mathbb{P}\Big(\max\limits_{\bs s \in B(\bs 0,\gamma)} 
|X(\bs s)|>\epsilon a_m, \max\limits_{\bs s' \in B(\bs h,\gamma)} |X(\bs s')|>\epsilon a_m\Big)=0.
\end{align*} 
\item 
\begin{enumerate}[(i)]
\item 
$\lim\limits_{\nto } m_n^d \sum\limits_{\bs h \in \mathbb{Z}^{d} : \Vert{\bs h}\Vert > r_n} \alpha_{1, 1}(\|\bs h\|)=0$, \label{cond2} 
\item 
$\sum\limits_{\bs h \in \mathbb{Z}^{d}} \alpha_{p,q}(\Vert{\bs h}\Vert) < \infty$ \ \mbox{ for } $2\le p+q\le 4$, \label{cond3}
\item 
$\lim\limits_{\nto }  m_n^{d/2} n{^{d/2}} \ \alpha_{1,{n^d}}(r_n)=0$, \label{cond4} 
\end{enumerate}
\end{enumerate}
Then the empirical extremogram $\widehat{\rho}_{AB,m_n}(\bs h)$ for $\bs h \in \mathcal{H}$ as in \eqref{EmpEst}, centred by the pre-asymptotic extremogram in \eqref{preasymptotic}, is asymptotically normal; more precisely,
\begin{equation}
\Big(\frac{n}{m_n}\Big)^{d/2}\big(\widehat{\rho}_{AB,m_n}(\bs h) -\rho_{AB,m_n}(\bs h)\big)_{\bs h\in \mathcal{H}} \std \mathcal{N}(\bs 0,\Pi),\quad \nto,\label{asyspace}
\end{equation}
where
 $\Pi = \mu(A)^{-4} F\Sigma F\trans \in \mathbb{R}^{p \times p}$, and the matrix $\Sigma \in \mathbb{R}^{(p+1) \times (p+1)}$ has
 for $1 \leq i,j \leq p+1$ components
\begin{align}
\Sigma_{ii} &= 
\mu_{B(\bs 0, \ga)}(D_i) + \sum_{\bs h \in \mathbb{Z}^d \backslash\{\bs 0\}}\tau_{B(\bs{0},\ga)\times B(\bs h,\ga)}(D_i\times D_i)=:\sigma^2_{B(\bs 0,\gamma)}(D_i), \label{SigmaCoeff1}\\
\Sigma_{ij} &= \mu_{B(\bs 0, \ga)}(D_i\cap D_j) + \sum_{\bs h \in \mathbb{Z}^d \backslash\{\bs 0\}} \tau_{B(\bs{0},\ga)\times B(\bs{h},\ga)}(D_i\times D_j), \quad i \neq j. \label{SigmaCoeff2}
\end{align}
The matrix $F$ consists of a diagonal matrix $F_1$ and a vector $F_2$ in the last column:
\begin{align}
F &= [F_1,F_2]\quad\mbox{with} \label{matrixF}\\
 F_1 &= \textnormal{diag}(\mu(A)) \in \mathbb{R}^{p \times p}, \quad F_2 = (-\mu_{B(\bs 0, \ga)}(D_1),\ldots,-\mu_{B(\bs 0, \ga)}(D_{p}))\trans.\nonumber
\end{align}
\ethe

\brem\label{rem4.3}
(i) \, In the proof (given in Section~5) we use a big block/small block argument. For simplicity we assume that $n/m_n$ is an integer and subdivide $\cals_n$ into $(n/m_n)^d$ non-overlapping $d$-dimensional blocks with side length $m_n$.
Theorem~1 of \citet{Cho} divides $n^d$ into {$n^d/m_n$} blocks; i.e., their sequence $m_n$ corresponds to our $m_n^d$, so that their assumptions look slightly different.
In our notation, they require that $m_n^{2(d+1)}/n \to 0$ as $\nto$, which is more restrictive than condition (M2) of $m_n^2r_n^2/n \to 0$, and indeed too restrictive for processes such as the max-moving average process and the Brown-Resnick process discussed below.\\[2mm]
(ii) \, If the choice $m_n=n^{\beta_1}$ and $r_n=n^{\beta_2}$ with $0 < \beta_2 < \beta_1 <1$ satisfies conditions (M3) and (M4),
then for $\beta_1 \in (0,1/2)$ and $\beta_2 \in (0,\min\{\beta_1;1/2-\beta_1\})$ the condition (M2) also holds and we obtain the CLT \eqref{asyspace}. 
\erem

The pre-asymptotic extremogram \eqref{preasymptotic} in the central limit theorem can be replaced by the theoretical one \eqref{extremo}, if it converges to the theoretical extremogram with the same convergence rate as the empirical extremogram to the pre-asymptotic extremogram; i.e., if 
\begin{equation}\label{condition5}
\Big(\frac{n}{m_n}\Big)^{d/2}\left(\rho_{AB,m_n}(\bs h) - \rho_{AB}(\bs h)\right)_{\bs h \in \mathcal{H}} \to \bs 0,\quad n\to \infty.
\end{equation}
Relation \eqref{condition5} does not hold for every strictly stationary regularly varying spatial process or time series for which \eqref{asyspace} is satisfied. 
Theorem~\ref{CLT_True} states a necessary and sufficient condition for processes with Fr{\'e}chet marginal distributions such that both \eqref{asyspace} and \eqref{condition5} hold. 
For general regularly varying stochastic processes such a result would hold under second order conditions on the finite-dimensional regularly varying distributions of the process, but we do not pursue this topic further.

\begin{theorem}[CLT for processes with Fr{\'e}chet margins]\label{CLT_True}
Let $\{X(\bs s): \bs s \in \bbr^d\}$ be a strictly stationary max-stable process in $\R^d$ with 
standard unit Fr{\'e}chet margins. 
Let $\rho_{AB}$ be its extremogram \eqref{extremo} and $\rho_{AB,m_n}$ the corresponding pre-asymptotic version \eqref{preasymptotic} for sets $A=(a_1,a_2)$ and $B=(b_1,b_2)$ with $0<a_1<a_2 \leq \infty$ and $0 <b_1<b_2 \leq \infty.$  Assume that the process is observed on $\cals_n$ and let $\mathcal{H}=\{\bs h_1,\ldots, \bs h_p\}$ be a finite set of lags in $\mathbb{Z}^d$ satisfying $\mathcal{H} \subseteq B(\bs 0, \ga)$ for some $\ga>0$. 
Furthermore, suppose that conditions (M1)--(M4) of Theorem~\ref{stasyn} hold for appropriately chosen sequences $m_n,r_n\to\infty$. 
\noindent 
Then the limit relation \eqref{condition5} holds if and only if $n/m_n^3 \to 0$ as $\nto$.
In this case we obtain
\begin{equation}
\Big(\frac{n}{m_n}\Big)^{d/2}\big(\widehat{\rho}_{AB,m_n}(\bs h) -\rho_{AB}(\bs h)\big)_{\bs h\in \mathcal{H}} \std \mathcal{N}(\bs 0,\Pi),\quad \nto,\label{asyspace_trueext}
\end{equation}
where $\Pi$ is given in Theorem~\ref{stasyn}.
\end{theorem}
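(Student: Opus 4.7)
The plan is to quantify the bias $\rho_{AB,m_n}(\bs h)-\rho_{AB}(\bs h)$ explicitly by exploiting the closed form of the bivariate distributions of a max-stable process with unit Fréchet margins, and then to match rates with the CLT scaling given in Theorem~\ref{stasyn}.

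First, I would write, for each fixed $\bs h\in\mathcal{H}$,
\[
\mathbb{P}(X(\bs 0)\le x,\,X(\bs h)\le y) = \exp(-V_{\bs h}(x,y)),
\]
where the bivariate exponent measure $V_{\bs h}$ is homogeneous of order $-1$ and satisfies $V_{\bs h}(x,\infty)=1/x$, $V_{\bs h}(\infty,y)=1/y$. Since the marginal is unit Fréchet, $\mathbb{P}(X(\bs 0)>a_n) = 1-\exp(-1/a_n)\sim 1/a_n$, so the normalising constants must satisfy $a_n\sim n^d$; in particular $a_{m_n}^{-1}$ is of order $m_n^{-d}$.

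Next, I would apply inclusion--exclusion to express both the numerator and denominator of $\rho_{AB,m_n}(\bs h)$ as linear combinations of terms of the form $\exp(-c/a_{m_n})$, using the homogeneity $V_{\bs h}(a_m x, a_m y) = V_{\bs h}(x,y)/a_m$ and the marginal formulas above (with the obvious conventions when $a_2$ or $b_2$ equals $\infty$). Taylor expanding each exponential in powers of $1/a_{m_n}$ and dividing should yield
\[
\rho_{AB,m_n}(\bs h) = \rho_{AB}(\bs h) + \frac{C(\bs h)}{a_{m_n}} + O\bigl(a_{m_n}^{-2}\bigr),
\]
where the ratio of the first-order coefficients reproduces $\rho_{AB}(\bs h)$ and $C(\bs h)$ is an explicit combination of $V_{\bs h}(a_i,b_j)$ and of $a_i^{-1},b_j^{-1}$ coming from the second-order terms.

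Combining $a_{m_n}^{-1}\asymp m_n^{-d}$ with the scaling in \eqref{condition5} then gives $(n/m_n)^{d/2}(\rho_{AB,m_n}(\bs h)-\rho_{AB}(\bs h)) \sim C(\bs h)\,(n/m_n^{3})^{d/2}$, which vanishes if and only if $n/m_n^3\to 0$, provided $C(\bs h)\ne 0$; Slutsky's lemma together with Theorem~\ref{stasyn} then delivers \eqref{asyspace_trueext}. The hard part will be the bookkeeping of the Taylor expansion --- especially tracking the four $V_{\bs h}(a_i,b_j)$ and two marginal $1/a_i,1/b_j$ terms with consistent sign conventions in the boundary cases $a_2=\infty$ or $b_2=\infty$ --- together with, for the necessity direction, confirming that $C(\bs h)$ does not vanish identically, so that the order-$m_n^{-d}$ bias is sharp and the condition $n/m_n^3\to 0$ is genuinely necessary rather than merely sufficient.
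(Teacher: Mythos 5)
Your proposal follows essentially the same route as the paper: the paper's proof invokes Lemma~\ref{LemmaExtremo_Fr}(b), which carries out exactly the second-order Taylor expansion of the exponent-measure representation you describe (with $a_{m_n}=m_n^d$), yielding a bias term $\frac{1}{2m_n^d}\ov V_2^2(\bs h)$ and hence the rate $(n/m_n^3)^{d/2}$. The only point you leave open --- that the second-order coefficient does not vanish, needed for necessity --- is automatic in the paper's explicit formula, since $\ov V_2^2(\bs h)$ is a positive multiple of a sum of squares $V_2^2(\bs h;a_i,b_j)$ with $V_2(\bs h;a_1,b_1)\ge \max(1/a_1,1/b_1)>0$.
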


\begin{proof}
First note that, since all finite-dimensional distributions are max-stable distributions with standard unit Fr\'echet margins, they are multivariate regularly varying. 
Let $V_2(\bs h;\cdot,\cdot)$ be the bivariate exponent measure defined through $\mathbb{P}(X(\bs 0) \leq x_1,X(\bs h) \leq x_2)$ $=\exp\{-V_2(\bs h;x_1,x_2)\}$ for $x_1,x_2>0$, cf. \citet{Beirlant}, Section~8.2.2.
From Lemma~\ref{LemmaExtremo_Fr}(b) we know that for $\bs h \in \mathcal{H}$
\begin{align*}
\rho_{AB,m_n}(\bs h)=&\Big[\rho_{AB}(\bs h) 
+ \frac1{2\,m_n^{d}}\ov{V}^2_2(\bs h)\Big](1+o(1)),\quad\nto,
\end{align*}
where $\ov{V}^2_2(\bs h):= \frac{a_1a_2}{a_2-a_1} ( V_2^2(\bs h;a_2,b_2) + V_2^2(\bs h;a_2,b_1) + V_2^2(\bs h;a_1,b_2) +V_2^2(\bs h;a_1,b_1))$ as given in \eqref{preasym_Fr} and
appropriate adaptations for $a_2=\infty$ and/or $b_2=\infty$ given in \eqref{preasym_Fr2}.
Hence, for $\bs h \in \mathcal{H}$,
 \begin{align*}
\Big(\frac{n}{m_n}\Big)^{d/2}\big(\rho_{AB,m_n}(\bs h) - \rho_{AB}(\bs h)\big)  
\sim  \Big(\frac{n}{m_n^{3}}\Big)^{d/2} \frac{\ov{V}^2_2(\bs h)}{2},
 \end{align*}
which converges to 0 if and only if $n/m_n^3 \to 0$.
\end{proof}

\brem\label{rem4.4}
The requirement $n/m_n^3 \to 0$ as $\nto$ needed in Theorem~\ref{CLT_True} contradicts the condition $m_n^{2(d+1)}/n \to 0$ required in \citet{Cho}; thus, under the conditions stated in that paper, only the CLT \eqref{asyspace} centred by the pre-asymptotic extremogram can be proved. 
However, $n/m_n^3 \to 0$ as $\nto$ does not contradict the assumptions of Theorem~\ref{stasyn} above, in particular, the much weaker assumption (M2). \\[2mm]
(ii) \, From Theorem~\ref{CLT_True} in relation to Remark~\ref{rem4.3}~(ii) we conclude that we need to choose $\beta_1>1/3$ in order to satisfy the CLT \eqref{asyspace_trueext}. This is not a contradiction to the conditions of Theorem~\ref{stasyn} and we conclude that for $\beta_1 \in (1/3,1/2)$ and $\beta_2 \in (0,\min\{\beta_1;1/2-\beta_1\}),$ we have 
\beam\label{CLT_Truebeta}
n^{\frac{d}{2}(1-\beta_1)}\big(\widehat{\rho}_{AB,m_n}(\bs h) -\rho_{AB}(\bs h)\big)_{\bs h\in \mathcal{H}} \std \mathcal{N}(\bs 0,\Pi),\quad \nto.
\eeam
\erem

We discuss our findings for two prominent examples.

\bexam[Max-moving average  (MMA) process]\label{ex:MMA} \\
We start with a model for the process $\{X(\bs s): \bs s \in \bbz^d\}$ corresponding to the discrete observation scheme. 
It has been suggested in \citet{Cho} based on a time series model of \citet{DR89}.
Let $Z(\bs s)$ for $\bs s \in \mathbb{Z}^d$ be i.i.d. standard unit Fr{\'e}chet random variables and set 
\begin{align}
X^\star(\bs s):=\max_{\bs z \in \bbz^d} \phi^{\|\bs z\|} Z(\bs s-\bs z), \quad \bs s \in \bbz^d, \label{MMA}
\end{align}
for some $0<\phi<1$. 
Then $\{X^\star(\bs s): \bs s \in \bbz^d\}$ is a stationary \textit{max-moving average} (MMA) process, also considered in equation~(25) of \cite{Cho}. 
As in \cite{Cho} we deduce the following marginal distributions.
The number $N(j)$ of lag vectors $\bs h \in \bbz^d$ with norm $j=\|\bs h\|$ is of order $\mathcal{O}(j^{d-1})$ and
\begin{align*}
V_1:=\sum_{\bs h \in \bbz^d} \phi^{\|\bs h\|}=\sum_{0 \leq j < \infty} \phi^j N(j) < \infty.
\end{align*} 
The univariate margins are unit Fr{\'e}chet with scale parameter $V_1$; i.e.,  
\begin{align*}
\mathbb{P}(X^\star(\bs 0) \leq x)=\exp\Big\{-\frac{V_1}{x}\Big\}, \quad x>0.
\end{align*}
Define $Q_{\bs h}(j):=\big|\{\bs s \in \bbz^d: \min\{\|\bs s\|,\|\bs s+\bs h\|\}=j\}\big| \leq 2N(j).$ 
With $V_2(\bs h):=V_2(\bs h;1,1)=\sum_{0 \leq j < \infty} Q_{\bs h}(j)\,\phi^j$, we have for the bivariate margins at lag $\bs h\in\Z^d$,
\begin{align*}
\mathbb{P}(X^\star(\bs 0) \leq x, X^\star(\bs h) \leq x)=\exp\Big\{-\frac{V_2(\bs h)}{x}\Big\}, \quad x > 0.
\end{align*}
We standardise the process \eqref{MMA} by setting 
\begin{align} \label{MMA_stand}
X(\bs s):=X^\star(\bs s)/V_1, \quad \bs s \in \bbz^d.
\end{align}
As a consequence we can choose $a_m=m_n^d$ in Definition~\ref{extremo}. 
We further conclude that the extremal coefficient (cf. \citet{Beirlant}, Section~8.2.7) at lag $\bs h \in \bbz^d$ for the process \eqref{MMA_stand} is given by
\begin{align}\label{extremo_MMA}
\theta(\bs h):=\frac{V_2(\bs h)}{V_1}=\frac{1}{V_1} \sum_{0 \leq j < \infty} Q_{\bs h}(j)\,\phi^j.
\end{align}
Note that $2-\theta(\bs h)=\frac{1}{V_1} \sum_{\|\bs h\|/2 \leq j < \infty} \phi^j[2N(j)-Q_{\bs h}(j)],$ where we use that $Q_{\bs h}(j)=2N(j)$ for $j<\|\bs h\|/2$; see \cite{Cho}, p.~8.

We now verify the conditions of Theorem~\ref{stasyn} for the process $\{X(\bs s) : \bs s\in\Z^d\}$ as in $\eqref{MMA_stand}$ and a chosen set of lags $\mathcal{H} \subseteq B(\bs 0,\ga)$ for some $\ga>0$. \\
We start with an upper bound for the $\alpha$-mixing coefficients $\alpha_{k,\ell}(r)$ for $k, \ell \in \bbn$ and $r>0$ defined in \eqref{alpha_balls}.
To simplify notation, we use $C$ throughout to denote some positive constant, although the actual value of $C$ may change from line to line. 
As in equation~(3.3) of \citet{buhl1} we use Corollary~2.2 of \citet{Dombry} to deduce
\begin{align}\label{4.10a}
\alpha_{k,\ell}(r) &\leq C k\ell\sup_{\|\bs h\|>r} 2-\theta(\bs h) \nonumber\\
&\leq C k\ell\sup_{\|\bs h\|>r}  \sum_{\|\bs h\|/2 \leq j < \infty} \phi^j[2N(j)-Q_{\bs h}(j)] \nonumber \\
& \leq C k\ell\sum_{r/2 \leq j < \infty} 2N(j)\,\phi^j \leq C  k\ell\sum_{r/2 \leq j < \infty} j^{d-1}\, \phi^j,
\end{align}
since $N(j)$ is of order $\mathcal{O}(j^{d-1})$ as mentioned above. 
An integral bound yields for fixed $k,\ell \in \bbn$,
\begin{align}
\alpha_{k,\ell}(r) &\leq C\int_{r/2}^{\infty} t^{d-1}\,\phi^t \, dt=C|\log(\phi)|^{-d} \Gamma\big(d,\frac{r}{2}|\log(\phi)|\big)\\
&\leq C \phi^{r/2} \sum_{k=0}^{d-1} \frac{r^k|\log(\phi)|^k}{2^k k!} =\mathcal{O}(r^{d-1}\,\phi^{r/2}), \label{bound_alpha_MMA}
\end{align}
as $r \to \infty.$ We denote by $\Gamma(s,y) = \int_y^{\infty}t^{s-1}e^{-t} dt = (s-1)!e^{-y}\sum_{i=0}^{s-1}y^i/i!$ for $s \in \mathbb{N}$ the incomplete gamma function.
Since $r^{d-1}\phi^{r/2} \to 0$, this  implies  that $\{X(\bs s) : \bs s\in\Z^d\}$ is $\alpha$-mixing; thus (M1) is satisfied.\\
Now we verify (M3). 
To this end we compute for $\bs s, \bs s' \in \bbz^d$ and $x>0$, using a Taylor expansion, the limit as $x \to \infty$:
\begin{align*}
\mathbb{P}(X(\bs s)>x,X(\bs s')>x)&=1-2\mathbb{P}(X(\bs 0) \leq x)+\mathbb{P}(X(\bs s)\leq x,X(\bs s')\leq x) \\
&=1-2\exp\Big\{-\frac{1}{x}\Big\}+\exp\Big\{-\frac{V_2(\bs s-\bs s')}{V_1x}\Big\} \\
&=\frac{2}{x}-\frac{V_2(\bs s-\bs s')}{V_1 x}+\mathcal{O}\big(\frac{1}{x^2}\big)\\
&=\frac{1}{x}(2-\theta(\bs s-\bs s'))+\mathcal{O}\big(\frac{1}{x^2}\big)\\
&\le \frac{C}{x} \sum_{\|\bs s - \bs s'\|/2\le j <\infty} j^{d-1}\,\phi^j +\mathcal{O}\big(\frac{1}{x^2}\big)
\end{align*}
by \eqref{4.10a}.
Hence,  for $\epsilon>0$, as $\nto$,
\begin{align*}
& \mathbb{P}\Big(\max_{\bs s \in B(\bs 0,\ga)} X(\bs s)> \epsilon m_n^d,\max_{\bs s \in B(\bs h,\ga)} X(\bs s')> \epsilon m_n^d\Big)\\ 
\leq & \sum_{\bs s \in B(\bs 0,\ga)} \sum_{\bs s' \in B(\bs h,\ga)} \mathbb{P}\big(X(\bs s)> \epsilon m_n^d, X(\bs s')> \epsilon m_n^d\big) \\
\leq & \sum_{\bs s \in B(\bs 0,\ga)} \sum_{\bs s' \in B(\bs h,\ga)} \Big\{ \frac{C}{\epsilon m_n^d} \sum_{\|\bs s-\bs s'\|/2 \leq j < \infty} j^{d-1}\,\phi^j +\mathcal{O}\big(\frac{1}{m_n^{2d}}\big)\Big\} \\
\leq & \sum_{\bs s \in B(\bs 0,\ga)} \sum_{\bs s' \in B(\bs h,\ga)} \Big\{ \frac{C}{\epsilon m_n^d} \|\bs s-\bs s'\|^{d-1} \phi^{\frac{\|\bs s-\bs s'\|}{2}}+\mathcal{O}\big(\frac{1}{m_n^{2d}}\big)\Big\} \\
\leq &\frac{C|B(\bs 0, \ga)|^2}{\epsilon m_n^d} (\|\bs h\|-2 \ga)^{d-1} \phi^{\frac{\|\bs h\|-2\ga}{2}} + \mathcal{O}\big(\frac{1}{m_n^{2d}}\big),
\end{align*}
where in the second last step we use the same bound as in \eqref{bound_alpha_MMA}, and in the last step we use that $\|\bs h\|^{d-1} \phi^{\|\bs h\|/2}$ decreases for sufficiently large $\|\bs h\|$.
Therefore, we conclude
\begin{align*}
&\lim\limits_{\kto } \limsup\limits_{\nto }
\sum\limits_{\bs h \in \mathbb{Z}^{d} :  k< \Vert{\bs h}\Vert \leq r_n}
\Big\{ m_n^d 
\mathbb{P}\Big(\max\limits_{\bs s \in B(\bs 0,\gamma)} 
X(\bs s)>\epsilon m_n^d, \max\limits_{\bs s' \in B(\bs h,\gamma)} X(\bs s')>\epsilon m_n^d\Big)\Big\} \\
\leq & \lim\limits_{\kto } \limsup\limits_{\nto } \,C\sum\limits_{\bs h \in \mathbb{Z}^{d} :  k< \|\bs h\| \leq r_n} \Big\{ (\|\bs h\|-2 \ga)^{d-1} \phi^{\frac{\|\bs h\|-2\ga}{2}}\Big\}+ \limsup\limits_{\nto}\,\mathcal{O}\big(\frac{r_n^d}{m_n^d}\big)=0,
\end{align*} 
since $r_n=o(m_n)$,
where we use for the last inequality that $\big|\{\bs h \in \bbz^d: \|\bs h\| \leq r_n\}\big|=\mathcal{O}(r_n^d)$.\\
Turning to condition (M4i), using~\eqref{bound_alpha_MMA}, we have as $\nto$,
\begin{align*}
m_n^d \sum_{\bs h \in \bbz^d: \|\bs h\| > r_n} \alpha_{1,1}(\|\bs h\|) &\leq C m_n^d \sum_{j > r_n} j^{d-1}\, \alpha_{1,1}(j) \\
& \leq C m_n^d \sum_{j > r_n} j^{2(d-1)}\, \phi^{j/2} \leq  C m_n^d  r_n^{2(d-1)} \phi^{r_n/2},
\end{align*}
which follows again from an integral bound. 
Since $$m_n^dr_n^{2(d-1)} \phi^{r_n/2}=\exp\{d\log(m_n)-r_n|\log(\phi^{1/2})|+2(d-1) \log (r_n)\},$$ 
if we choose $m_n$ and $r_n$ such that 
\begin{align}
\log(m_n)=o(r_n),\quad \nto, \label{MMA1}
\end{align}
then condition (M4i) is satisfied.\\
Now observe that for $2 \leq p+q \leq 4$, using again \eqref{bound_alpha_MMA},
\begin{align*}
\sum_{\bs h \in \bbz^d} \alpha_{p,q}(\| \bs h\|) \leq \alpha_{p,q}(0)+ C \sum_{j>0} j^{d-1} \,\alpha_{p,q}(j) \leq \alpha_{p,q}(0) + C \sum_{j>0} j^{2(d-1)}\,\phi^{j/2}  < \infty.
\end{align*}
This shows (M4ii).\\
Finally, we turn to the condition (M4iii) and compute, using \eqref{4.10a} and \eqref{bound_alpha_MMA},
\begin{align*}
m_n^{d/2} n^{d/2} \alpha_{1,n^d}(r_n) &\leq C m_n^{d/2} n^{(3d)/2} r_n^{d-1} \phi^{r_n/2}\\
&=C \exp\big\{\frac{3d}{2}\log(n)-r_n|\log(\phi)|+\frac{d}{2}\log(m_n)+(d-1)\log(r_n)\big\}.
\end{align*}
Thus, we must choose $r_n$ such that
\begin{align}
\log(n)=o(r_n),\quad \nto. \label{MMA2}
\end{align}
To satisfy both \eqref{MMA1} and \eqref{MMA2} and the conditions $r_n=o(m_n)$, $m_n=o(n)$, we can thus choose $m_n=n^{\beta_1}$ and $r_n=n^{\beta_2}$ with $0 < \beta_2 < \beta_1 <1$. 
Hence, Remarks~\ref{rem4.3}(ii) and~\ref{rem4.4}(ii) apply such that \eqref{CLT_Truebeta} holds for $\beta_1 \in (1/3,1/2)$ and $\beta_2 \in (0,\min\{\beta_1;1/2-\beta_1\})$.
\eexam

\bexam[Brown-Resnick process]\label{ex:BR}\\
Consider a strictly stationary {Brown-Resnick process}, which has representation
\begin{align}\label{limitfield}
X(\bs{s}) = \bigvee\limits_{j=1}^\infty \left\{\xi_j \,  e^{W_j(\bs{s})-\delta(\bs s)} \right\},\quad \bs{s}\in\R^d.
\end{align}
where 
$\{\xi_j : j\in\N\}$ are points of a Poisson process on $[0,\infty)$ with intensity $\xi^{-2}d\xi$, the \textit{dependence function} $\delta$ is non-negative and conditionally negative definite and 
$\{W_j(\bs{s}): \bs{s} \in \mathbb{R}^d\}$ are independent replicates of a Gaussian process $\{W(\bs s): \bs{s} \in \mathbb{R}^d\}$ with stationary increments, $W(\bs{0})=0$, $\mathbb{E} [W(\bs{s})]=0$, and covariance function
$$\cov[W(\bs{s}^{(1)}),W(\bs{s}^{(2)})] 
=\delta(\bs{s}^{(1)})+\delta(\bs{s}^{(2)}) -\delta(\bs{s}^{(1)}-\bs{s}^{(2)}).$$
All finite-dimensional distributions are multivariate extreme value distributions with standard unit Fr\'echet margins. 
Representation \eqref{limitfield} goes back to \citet{deHaan} and \citet{Gine}; for more details on Brown-Resnick processes we refer to \citet{Brown}, \citet{Steinkohl}, and \citet{Schlather2}. 
Brown-Resnick processes have been successfully fitted to time series, spatial data and space-time data. Inference methods include both parametric and semi- or non-parametric approaches. 
Empirical studies can for example be found in \citet{buhl1,Steinkohl2,Steinkohl3, Cho,Huser,Ribatet} and \citet{steinkohlphd}.
This important model is treated in detail in the accompanying paper \cite{Steinkohl3}.
In that paper it is proved that the mixing conditions of Theorem~\ref{stasyn} hold for sequences
 $r_n=o(m_n)$, $m_n=o(n)$ and
that  we can choose $m_n=n^{\beta_1}$ and $r_n=n^{\beta_2}$ with $0 < \beta_2 < \beta_1 <1$. 
Hence, Remarks~\ref{rem4.3}(ii) and~\ref{rem4.4}(ii) apply such that \eqref{CLT_Truebeta} holds for $\beta_1 \in (1/3,1/2)$ and $\beta_2 \in (0,\min\{\beta_1;1/2-\beta_1\})$.
Moreover, we prove there that for $\beta_1 \leq 1/3$, the empirical extremogram can be bias-corrected such that the resulting empirical estimator satisfies a CLT to the true extremogram. We further derive a semiparametric estimator for a parametrised extremogram based on a least squares procedure, investigate its behaviour in a simulation study, and apply it to space-time data.
\eexam

\section{Proof of Theorem~\ref{stasyn} }

The empirical extremogram as defined in \eqref{EmpEst} can be viewed as a ratio of estimates of {$\mu_{B(\bs 0, \ga)}(C)$} and {$\mu_{B(\bs 0, \ga)}(D)$} for two suitably chosen sets $C$ and $D$. 
Thus we first derive a LLN and a CLT for such estimates, formulated in the two Lemmas~\ref{st-asy_la} and \ref{asym_normal} below.

We consider estimates of $\mu_{B(\bs 0,\ga)}(C)$, where $C$ is a $\mu_{B(\bs 0,\ga)}$-continuous Borel set in $\ov\R^{|B(\bs 0,\ga)|}\setminus\{\bs 0\}$  (i.e. $\mu_{B(\bs 0, \ga)}(\partial C)=0$). In particular, there exists some $\eps>0$ such that $C\subset\{\bs x\in \R^{|B(\bs 0,\ga)|} : \|\bs x\|>\eps\}$.
In view of \eqref{B6} a natural estimator for $\mu_{B(\bs 0, \ga)}(C)$ is
\beam\label{estmu}
\widehat{\mu}_{B(\bs 0,\ga),m_n}(C) := \Big(\frac{m_n}{n}\Big)^d \sum_{\bs s\in \cals_n} \mathbbmss{1}_{\{\frac{\bs Y(\bs s)}{a_m} \in C\}}.
\eeam
The proof is based on a big block/small block argument as follows. 
We choose sequences $m_n$ and $r_n$ satisfying the conditions of Theorem~\ref{stasyn}, and divide the grid $\cals_n$ into $(n/m_n)^d$ big $d$-dimensional blocks of side length $m_n$, where for simplicity we assume that $n/m_n$ is an integer. 
From those blocks we then cut off smaller blocks, which consist of the first $r_n$ elements in each of the $d$ dimensions. 
The large blocks are then separated (by these small blocks) with at least the distance $r_n$ in all dimensions and shown to be asymptotically independent. 
The construction is an extension of the corresponding time series construction; an interpretation of the big and small blocks in that framework can be found for example in \citet{DMZ} at the end of page~15.

\begin{lemma} \label{st-asy_la}
Let $\{X(\bs s): \bs s \in \mathbb{R}^d\}$ be a strictly stationary regularly varying  process in $\R^d$. 
Let $C$ be some $\mu_{B(\bs 0,\ga)}$-continuous Borel set in $\ov\R^{|B(\bs 0,\ga)|} \setminus \{\bs 0\}$.
Suppose that the following  mixing conditions are satisfied.
\begin{enumerate}[(1)]
\item 
$\{X(\bs{s}): \bs{s}\in\bbr^d\}$ is $\alpha$-mixing with $\alpha$-mixing coefficients $\alpha_{k,l}(r)$ defined in \eqref{alphaBolt}.
\item
There exist sequences $m:=m_n, r:=r_\nto$ with $m_n/n \to 0$ and $r_n/m_n \to 0$ as $\nto$ such that (M3) and (M4i) hold.
\end{enumerate}
Then, as $\nto$,
\begin{align}
\mathbb{E}\big[\widehat{\mu}_{B(\bs 0,\ga),m_n}(C)\big] & \to \mu_{B(\bs 0,\ga)}(C),\label{consist1}\\
\var\big[\widehat{\mu}_{B(\bs 0,\gamma),m_n}(C)\big] 
&\sim \Big(\frac{m_n}{n}\Big)^d \Big(\mu_{B(\bs 0,\ga)}(C)+\sum_{\bs h \in \mathbb{Z}^{d}\setminus\{\bs 0 \}} \tau_{B(\bs 0,\ga)\times B(\bs h,\ga)}(C \times C)\Big) \nonumber \\
&=: \Big(\frac{m_n}{n}\Big)^{d}\sigma_{B(\bs 0,\gamma)}^2(C).\label{varpm2}
\end{align}
If $\mu_{B(\bs 0,\ga)}(C)=0$, \eqref{varpm2} is interpreted as $\var\big[\widehat{\mu}_{B(\bs 0,\gamma),m_n}(C)\big] = o(m_n/n)$.
In particular, we have
\begin{align}\label{lln}
\widehat{\mu}_{B(\bs 0,\ga),m_n}(C) \stp \mu_{B(\bs 0,\ga)}(C), \quad \nto .
\end{align}
\end{lemma}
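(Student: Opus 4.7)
The plan is to prove the three claims in turn: convergence of the expectation, the variance asymptotics, and then the weak law via Chebyshev. By strict stationarity, $\mathbb{E}[\widehat{\mu}_{B(\bs 0,\ga),m_n}(C)] = (m_n/n)^d\,|\cals_n|\,\mathbb{P}(\bs Y(\bs 0)/a_m \in C) = m_n^d\,\mathbb{P}(\bs Y(\bs 0)/a_m \in C)$, which converges to $\mu_{B(\bs 0,\ga)}(C)$ directly from definition \eqref{B6}, giving \eqref{consist1}.

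For the variance, I expand the double sum by setting $Z_{\bs s} := \one{\{\bs Y(\bs s)/a_m \in C\}}$ and reindexing pairs by $\bs h = \bs s' - \bs s$. By stationarity,
\begin{align*}
\var\big[\widehat{\mu}_{B(\bs 0,\ga),m_n}(C)\big] = \Big(\frac{m_n}{n}\Big)^{2d} \sum_{\bs h \in \mathbb{Z}^d} N_n(\bs h)\, \cov(Z_{\bs 0},Z_{\bs h}),
\end{align*}
where $N_n(\bs h) \leq n^d$ counts admissible pairs and $N_n(\bs 0)=n^d$. For $\bs h = \bs 0$, the diagonal contributes $(m_n/n)^d\,\mu_{B(\bs 0,\ga)}(C)\,(1+o(1))$ by \eqref{B6}, while the subtracted square is of strictly lower order $O(m_n^{-d})$. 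For each fixed $\bs h \neq \bs 0$ the joint probability converges to $m_n^{-d}\,\tau_{B(\bs 0,\ga)\times B(\bs h,\ga)}(C\times C)$ by \eqref{B7}, so the term contributes $(m_n/n)^d\,\tau_{B(\bs 0,\ga)\times B(\bs h,\ga)}(C\times C)\,(1+o(1))$.

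The main work is then to justify interchange of limit and sum over $\bs h$, which I would handle by the standard three-region split. For $\|\bs h\|\le k$ (fixed $k$) the termwise limits pass through. For the mixing tail $\|\bs h\|>r_n$ I use that $Z_{\bs s}$ is $\mathcal{F}_{B(\bs s,\ga)}$-measurable, so $|\cov(Z_{\bs 0},Z_{\bs h})| \leq \alpha_{1,1}(\|\bs h\|)$ in the redefined sense of \eqref{alpha_balls}; multiplied by $N_n(\bs h)\le n^d$ and the prefactor $(m_n/n)^{2d}$, this gives a contribution bounded by $(m_n/n)^d\,m_n^d\sum_{\|\bs h\|>r_n}\alpha_{1,1}(\|\bs h\|) = (m_n/n)^d\,o(1)$ by (M4i). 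The delicate intermediate region $k<\|\bs h\|\le r_n$ is the main obstacle: here neither mixing nor termwise convergence is adequate, and this is precisely what condition (M3) is designed for. Since $C$ is bounded away from $\bs 0$ (some $\eps>0$ with $C\subset\{\|\bs x\|>\eps\}$), the event $\{\bs Y(\bs s)/a_m\in C\}$ is contained in $\{\max_{\bs s'\in B(\bs s,\ga)}|X(\bs s')|>\eps a_m\}$, so that the joint probability in the covariance is dominated by the expression appearing in (M3). Taking $\limsup_{n\to\infty}$ and then $k\to\infty$ shows that the middle region contributes negligibly, leaving exactly $(m_n/n)^d\,\sigma^2_{B(\bs 0,\ga)}(C)$ in the limit; the degenerate case $\mu_{B(\bs 0,\ga)}(C)=0$ is subsumed in the $o((m_n/n)^d)$ statement.

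Finally, \eqref{lln} follows from Chebyshev's inequality: since $\var[\widehat{\mu}_{B(\bs 0,\ga),m_n}(C)] = O((m_n/n)^d) \to 0$ by \eqref{varpm2} and $m_n/n \to 0$, combined with the convergence of the expectation from \eqref{consist1}, we obtain $\widehat{\mu}_{B(\bs 0,\ga),m_n}(C) \stp \mu_{B(\bs 0,\ga)}(C)$.
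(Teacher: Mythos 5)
Your proposal is correct and follows essentially the same route as the paper's proof: the same reindexing of the covariance sum by lags, the same three-region split (fixed $k$, intermediate region handled by (M3) using that $C$ is bounded away from $\bs 0$, tail handled by (M4i)), dominated convergence for the fixed-$k$ part, and Chebyshev for the weak law. The only point treated more explicitly in the paper is that in the intermediate region the subtracted product of marginals also has to be controlled, which is done by bounding its total contribution by $(r_n/m_n)^d \to 0$.
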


\bproof
Strict stationarity and relation \eqref{B6} imply that 
\begin{align*}
\mathbb{E}\big[\widehat{\mu}_{B(\bs 0,\ga),m_n}(C)\big]
& =\Big(\frac{m_n}{n}\Big)^d \sum_{\bs s \in \cals_n} \mathbb{P}\Big(\frac{\bs Y(\bs s)}{a_m} \in C\Big) 
= m_n^d\mathbb{P}\Big(\frac{\bs Y(\bs 0)}{a_m} \in C\Big) \\
& \to \mu_{B(\bs 0,\ga)}(C),\quad \nto.
\end{align*}
Further observe that
\begin{align}
\lefteqn{\var\big[\widehat{\mu}_{B(\bs 0,\ga),m_n}(C)\big] 
=\Big(\frac{m_n}{n} \Big)^{2d} \var\big[\sum_{\bs s\in \cals_n} \mathbbmss{1}_{\{\frac{\bs Y(\bs s)}{a_m} \in C\}}\big]}\notag\\
&=\Big(\frac{m_n}{n}\Big)^{2d} \big(n^d\var\big[\mathbbmss{1}_{\{\frac{\bs Y(\bs 0)}{a_m} \in C\}}\big] 
+\sum_{\bs s, \bs s' \in \cals_n \atop \bs s \neq \bs s'} 
\cov\big[\mathbbmss{1}_{\{\frac{\bs Y(\bs s)}{a_m} \in C\}},\mathbbmss{1}_{\{\frac{\bs Y(\bs s')}{a_m} \in C\}}\big]\Big) \notag\\
&=: A_1+A_2. \label{varpm}
\end{align}
By \eqref{B6} and since $\mathbb{P}(\bs Y(\bs 0)/a_m \in C) \to 0$ as $\nto$,
\begin{align*}
A_1&= \Big(\frac{m_n^{2}}{n}\Big)^d\mathbb{P}\Big(\frac{\bs Y(\bs 0)}{a_m} \in C\Big) \Big(1-\mathbb{P}\Big(\frac{\bs Y(\bs 0)}{a_m} \in C\Big)\Big) \sim {\Big(\frac{m_n}{n}\Big)^d} \mu_{B(\bs 0,\ga)}(C) {\to 0}.
\end{align*} 
Let 
$$L=L(n):=\{\bs h=(\bs s-\bs s')\in\Z^d : \bs s, \bs s'\in \cals_n, \bs s\neq \bs s' \}$$ 
be the set of spatial lags on the observation grid. 
We divide the spatial lags in $L$ into different sets. Observe that a spatial lag $\bs h=(h_1,\ldots, h_d)$ appears in $L$ exactly $\prod_{j=1}^d(n-|h_j|)$ times. 
For fixed $k \in\N$ we therefore have by stationarity 
\beam
\Big(\frac{n}{m_n}\Big)^d A_2 &=& 
m_n^d \sum_{\bs h \in L} \prod\limits_{j=1}^d \Big(1-\frac{|h_j|}{n}\Big) \cov[\mathbbmss{1}_{\{\frac{\bs Y(\bs 0)}{a_m} \in C\}},\mathbbmss{1}_{\{\frac{\bs Y(\bs h)}{a_m} \in C\}}] \nonumber\\
&=& m_n^d \Big(\sum_{\bs h \in L \atop 0<\Vert{\bs h}\Vert \leq k} + \sum_{\bs h\in L \atop k<\Vert{\bs h}\Vert \leq r_n}  + \sum_{\bs h\in L \atop \Vert{\bs h}\Vert > r_n}\Big)\nonumber \\
&&\quad\quad\quad\quad \prod\limits_{j=1}^d \Big(1-\frac{|h_j|}{n}\Big)\cov\big[\mathbbmss{1}_{\{\frac{\bs Y(\bs 0)}{a_m} \in C\}},\mathbbmss{1}_{\{\frac{\bs Y(\bs h)}{a_m} \in C\}}\big] \nonumber\\
&=:& A_{21}+A_{22}+A_{23}.\label{cov}
\eeam
Concerning $A_{21}$ we have,  
\begin{align*}
A_{21} = & m_n^d \sum_{\bs h \in L \atop 0 < \Vert{\bs h}\Vert \leq k} \prod\limits_{j=1}^d \Big(1-\frac{|h_j|}{n}\Big)
\Big[\mathbb{P}\Big(\frac{\bs Y(\bs 0)}{a_m} \in C, \frac{\bs Y(\bs h)}{a_m} \in C\Big)- \mathbb{P}\Big(\frac{\bs Y(\bs 0)}{a_m} \in C\Big)^2\Big].
\end{align*}
We have by \eqref{B6},
$$m_n^d \mathbb{P}\Big(\frac{\bs Y(\bs 0)}{a_m} \in C\Big)^2 \sim \mu_{B(\bs 0,\gamma)}(C) \mathbb{P}\Big(\frac{\bs Y(\bs 0)}{a_m} \in C\Big) \to 0,\quad n \to \infty.$$ 
Moreover, for $\bs h \in \mathbb{Z}^{d}\setminus\{\bs 0 \}$, by \eqref{B7},
\beam\label{positive}
m_n^d\mathbb{P}\Big(\frac{\bs Y(\bs 0)}{a_m} \in C, \frac{\bs Y(\bs h)}{a_m} \in C\Big) \to \tau_{{B(\bs 0,\gamma) \times B(\bs h,\gamma)}} (C \times C),\quad n \to \infty.
\eeam
Finally, by dominated convergence, 
\begin{align}\label{dom}
\lim_{\kto } \limsup_{n \to \infty} A_{21} =\sum_{\bs h \in \mathbb{Z}^{d}\setminus\{\bs 0 \}} \tau_{B(\bs{0},\ga)\times B(\bs{h},\ga)}(C \times C).
\end{align}
As to $A_{22}$, observe that for all $n \geq 0$ we have $\prod\limits_{j=1}^d (1-\frac{|h_j|}{n})\leq 1$ for $\bs h\in L$. 
Furthermore, since $C$ is bounded away from $\bs 0$, there exists $\epsilon>0$ such that $C \subset \{\bs x \in \ov{\mathbb{R}}^{|B(\bs 0,\ga)|}: \| \bs x \| > \epsilon\}.$
Hence, we obtain 
\begin{align*}
&|A_{22}| \leq  \sum_{\bs h\in L \atop k < \Vert{\bs h}\Vert \leq r_n} m_n^d\mathbb{P}\big(\frac{\bs Y(\bs 0)}{a_m} \in C, \frac{\bs Y(\bs h)}{a_m} \in C\big) + m_n^d \mathbb{P}\big(\frac{\bs Y(\bs 0)}{a_m} \in C\big)^2\\
 \leq & \sum_{\bs h \in \mathbb{Z}^{d} \atop k < \Vert{\bs h}\Vert \leq r_n} \Big\{m_n^d  \mathbb{P}\big(\|\bs Y(\bs 0)\|>\epsilon a_m, \|\bs Y(\bs h)\|>\epsilon a_m\big) 
+ \frac1{m_n^d} \big(m_n^d \mathbb{P}\big(\frac{\bs Y(\bs 0)}{a_m} \in C\big)\big)^2 \Big\}.
\end{align*}
From \eqref{B6} we know that $m_n^d\mathbb{P}(\bs Y(\bs 0)\in a_m C)\to \mu_{B(\bs 0,\ga)}(C)$ and, hence, the second summand can be estimated by 
${(r_n/m_n)^d\to 0}$ as $\nto$.
The first sum tends to 0 by (M3), exploiting the equivalence of norms on $\mathbb{R}^{|B(\bs 0,\gamma)|}$,
and it follows that
$$\lim_{\kto }\limsup_{\nto } A_{22}=0$$ 
Using the definition of $\alpha$-mixing for $A_1=\{\bs Y(\bs 0)/a_m \in C\}$ and $A_2=\{\bs Y(\bs h)/a_m \in C\}$, we obtain, 
\begin{align}\label{a23}
|A_{23}|&  \leq  m_n^d \sum_{\bs h\in L:  \|\bs h\| > r_n} 
\Big| \mathbb{P}\Big(\frac{\bs Y(\bs 0)}{a_m} \in C, \frac{\bs Y(\bs h)}{a_m} \in C\Big)
- \mathbb{P}\Big(\frac{\bs Y(\bs 0)}{a_m} \in C\Big)^2\Big| \nonumber\\
& \leq  m_n^d \sum_{\bs h \in \mathbb{Z}^d: \|\bs h\| > r_n} \alpha_{1,1}(\|\bs h\|) \to 0,\quad n \to \infty,
\end{align}
by condition (M4i).

Summarising these computations, we obtain from \eqref{cov} and \eqref{dom} that
$$A_2 \sim \Big(\frac{m_n}{n}\Big)^d \sum_{\bs h \in \mathbb{Z}^{d}\setminus\{\bs 0 \}} \tau_{B(\bs 0,\gamma) \times B(\bs h,\gamma)}(C \times C),\quad\nto,$$
 and, therefore, \eqref{varpm} implies \eqref{varpm2}.
Since ${m_n/n\to 0}$ as $\nto$, equations~\eqref{consist1} and \eqref{varpm2} imply \eqref{lln}. 
\eproof

For the proof of the next lemma, in contrast to \citet{Cho}, we proceed similarly as in the proofs of Lemma~\ref{st-asy_la} above and of Theorem~3.2 of \citet{Davis2} and keep the sequence $r_n$ (instead of $m_n$ in \cite{Cho}) in \eqref{ssm} as the distance between the large blocks. 
This construction allows for the much weaker conditions (M2).

\ble\label{asym_normal}
Let $\{X(\bs s): \bs s \in \mathbb{R}^d\}$ be a strictly stationary regularly varying  process in $\R^d$. 
Let the assumptions of Theorem~\ref{stasyn} hold for some $\ga\ge 0$.
Let $C$ be some $\mu_{B(\bs 0,\ga)}$-continuous Borel set in $\ov\R^{|B(\bs 0,\ga)|}\backslash\{\bs 0\}$.
Then
\begin{align}\label{cltpm}
&\wh S_{B(\bs 0,\gamma),m_n}  :=\Big(\frac{m_n}{n}\Big)^{d/2} \sum_{\bs s \in \cals_n} 
\Big[\mathbbmss{1}_{\{\frac{\bs Y(\bs s)}{a_m} \in C\}}-\mathbb{P}\Big(\frac{\bs Y(\bs s)}{a_m} \in C\Big)\Big]\nonumber\\
&=\Big(\frac{n}{m_n}\Big)^{d/2} (\widehat{\mu}_{B(\bs 0,\ga),m_n}(C)-\mu_{B(\bs 0,\ga),m_n}(C)) \std \mathcal{N}(0,\sigma_{B(\bs 0,\gamma)}^2(C)),
\end{align}
as $\nto$ with $\widehat{\mu}_{B(\bs 0,\ga),m_n}(C)$ as in \eqref{estmu}, $\mu_{B(\bs 0,\ga),m_n}(C)):=m_n^d \mathbb{P}(\bs Y(\bs 0)/a_m \in C)$ and $\sigma_{B(\bs 0,\gamma)}^2(C)$ given in~\eqref{varpm2}.
\ele

\bproof
Like \citet{Cho} we follow Lemma~2 in \citet{Bolthausen} and define 
\beam\label{I}
I(\bs s):=\mathbbmss{1}_{\{\bs Y(\bs s)/ a_m \in C\}}-\mathbb{P}(\bs Y(\bs 0)/a_m \in C),\quad \bs s \in \cals_n.
\eeam
Note that by stationarity,
\begin{align}\label{smpm}
\wh S_{B(\bs 0,\gamma),m_n} 
&=\Big(\frac{m_n}{n}\Big)^{d/2} \sum_{\bs s\in \cals_n } I(\bs s).
\end{align}
The boundary condition required in equation~(1) in \citet{Bolthausen} is trivially satisfied for the regular grid $\cals_n$.\\
We first observe that $0 \leq \mathbb{E}[I(\bs s)I(\bs s')]
=\cov[ \mathbbmss{1}_{\{\bs Y(\bs s)/a_m \in C\}},  \mathbbmss{1}_{\{\bs Y(\bs s')/a_m \in C\}}]$
and, using~\eqref{varpm2} for the asymptotic result, that
\begin{align}\label{covfinite}
0 <  \sigma^2_{B(\bs 0,\ga)} & \sim \var[\wh S_{B(\bs 0,\gamma),m_n}], \,
\var[\wh S_{B(\bs 0,\gamma),m_n}] \le \Big(\frac{m_n}{n}\Big)^d \sum_{\bs s,\bs s' \in \bbz^d} |\mathbb{E}[I(\bs s)I(\bs s')]| < \infty, 
\end{align}
such that $\sum_{\bs s,\bs s' \in \bbz^d} \mathbb{E}[I(\bs s)I(\bs s')]>0$.
Finiteness in \eqref{covfinite} follows from a classic result found e.g. in \citet{Ibragimov}, Theorems~17.2.2 and~17.2.3, and the required summability conditions of the $\alpha$-mixing coefficients.\\
Next, we define 
\beam\label{ssm}
 v_n:=\Big(\frac{m_n}{n}\Big)^{d} \ \sum_{\bs s, \bs s' \in \cals_n \atop \|\bs s-\bs s'\| \leq {r_n}}  \mathbb{E}\big[ I(\bs s)  I(\bs s')\big].
\eeam
Decompose
\begin{align}\label{varcov}
v_n &=\Big(\frac{m_n}{n}\Big)^d 
\sum_{\bs s,\bs s'\in \cals_n \atop \|\bs s-\bs s'\| \leq { r_n}} \E[I(\bs s) I(\bs s')]
\nonumber \\
& = \Big(\frac{m_n}{n}\Big)^d \sum_{\bs s, \bs s' \in \cals_n} \E[I(\bs s) I(\bs s')] -\Big(\frac{m_n}{n}\Big)^d 
\sum_{\bs s,\bs s'\in \cals_n \atop \|\bs s-\bs s'\| > r_n} \E[I(\bs s) I(\bs s')] \nonumber\\
& =\var{[\wh S_{B(\bs 0,\gamma),m_n}]}-\Big(\frac{m_n}{n}\Big)^d 
\sum_{\bs s,\bs s'\in \cals_n \atop \|\bs s-\bs s'\| > r_n} \E[I(\bs s) I(\bs s')].
\end{align}
Hence, using the asymptotic result in~\eqref{covfinite},
\begin{align*}
\frac{v_n}{\var{[\wh S_{B(\bs 0,\gamma),m_n}]}}
&=1-\Big(\frac{m_n}{n}\Big)^d \frac1{\si^2_{B(\bs 0,\gamma)}(C)}
\sum\limits_{\bs s,\bs s'\in \cals_n \atop \|\bs s- \bs s'\| > r_n}  \E[I(\bs s) I(\bs s')](1+o(1)).
\end{align*}
Now note that 
\begin{align*}
& \Big(\frac{m_n}{n}\Big)^d 
\sum_{\bs s, \bs s'\in \cals_n \atop \|\bs s- \bs s'\| > r_n} \E[I(\bs s) I(\bs s')] \\
\leq & m_n^d
\sum_{\bs h \in L: \|\bs h\| > r_n} \prod\limits_{j=1}^d \Big(1-\frac{|h_j|}{n}\Big) \\
& \quad\quad\quad \Big|\mathbb{P}\Big(\frac{\bs Y(\bs 0)}{a_m} \in C, \frac{\bs Y(\bs h)}{a_m} \in C \Big) -\Big[\mathbb{P}\Big(\frac{\bs Y(\bs 0)}{a_m} \in C\Big)\Big]^2 \Big| \\
\leq & m_n^d \sum_{\bs h \in \mathbb{Z}^d: \|\bs h\| > r_n} \alpha_{1,1}(\| \bs h \|) \rightarrow 0, \quad \nto,
\end{align*}
as in \eqref{a23}, with $\alpha$-mixing coefficients defined in~\eqref{alpha_balls}.
Therefore, 
\begin{align} \label{preasyvar}
v_n \sim \var[\wh S_{B(\bs 0,\gamma),m_n}] \rightarrow \si^2_{B(\bs 0,\gamma)}(C), \quad \nto.
\end{align}
Next we define the standardized quantities 
\begin{align*}
\overline{S}_{n} & :=v_n^{-1/2}\wh S_{B(\bs 0,\gamma),m_n}=  v_n^{-1/2}\Big(\frac{m_n}{n}\Big)^{d/2} \sum_{\bs s\in \cals_n } I(\bs s),\\
\overline{S}_{\bs s,n} &:=v_n^{-1/2}\Big(\frac{m_n}{n}\Big)^{d/2}  \sum_{\bs s' \in \cals_n \atop \|\bs s- \bs s'\| \leq r_n} I(\bs s').
\end{align*}
We now show condition (b) of Lemma~2 in \citet{Bolthausen}.
To this end let $\im \in \mathbb{C}$ be the complex imaginary unit.
 If $\lim_{\nto } \mathbb{E}\big[(\im\lambda-\overline{S}_n)\exp\{\im\lambda\overline{S}_n\}\big]=0$ for all $\lambda \in \mathbb{R}$, then (by Stein's Lemma) the law of $\overline{S}_n$ converges to the standard normal one and we obtain \eqref{cltpm} by \eqref{smpm} and \eqref{preasyvar}.\\
First note that for arbitrary $\lambda \in \mathbb{R}$,
 \begin{align*}
&(\im\lambda-\overline{S}_n)\exp\{\im \lambda \overline{S}_n\} \\
=& \im\lambda\exp\{\im\lambda \overline{S}_n\}\Big(1-v_n^{-1/2}\sum_{\bs s\in \cals_n} \Big(\frac{m_n}{n} \Big)^{d/2} I(\bs s) \ov S_{\bs s,n}\Big) \\
&-v_n^{-1/2}\exp\{\im\lambda \overline{S}_n\} \sum_{\bs s \in \cals_n} \Big(\frac{m_n}{n}\Big)^{d/2} I(\bs s) (1-\exp\{-\im\lambda \overline{S}_{\bs s,n}\}-\im\lambda\overline{S}_{\bs s,n}) \\
&-v_n^{-1/2}\sum_{\bs s \in \cals_n}\Big(\frac{m_n}{n}\Big)^{d/2} I(\bs s) \exp\{-\im\lambda(\overline{S}_{\bs s,n}-\overline{S}_n)\} \\ 
=:&B_1-B_2-B_3.
\end{align*}
Since $|\exp\{ix\}|=1$ for all $x \in \mathbb{R}$, we compute
$$|B_1|=|\lambda|\Big|1-v_n^{-1} \Big(\frac{m_n}{n}\Big)^d  \sum_{\bs s,\bs s' \in \cals_n  \atop \|\bs s-\bs s'\| \leq {r_n}} I(\bs s)I(\bs s')\Big|$$ 
and, using \eqref{ssm},
\beao
|B_1| &=&  |\lambda| v_n^{-1} 
\Big(\frac{m_n}{n}\Big)^{d} \Big| \sum_{\ \|\bs s-\bs s'\| \leq {r_n}} I(\bs s)I(\bs s')- \sum_{ \|\bs s-\bs s'\| \leq {r_n}}  \mathbb{E}\big[ I(\bs s)  I(\bs s')\big]\Big|\\
&= &  |\lambda| v_n^{-1} 
\Big(\frac{m_n}{n}\Big)^{d}  \Big|\sum_{ \|\bs s-\bs s'\| \leq {r_n}}
\Big( I(\bs s)I(\bs s')-  \mathbb{E}\big[ I(\bs s)  I(\bs s')\big]\Big)\Big|,
\eeao
such that 
\beao
E[|B_1|^2]=\lambda^2 v_n^{-2} \Big(\frac{m_n}{n}\Big)^{2d} \sum_{  \|\bs s-\bs s'\| \leq { r_n}} \sum_{  \|\bs \ell- \bs \ell'\| \leq {r_n}} \cov\big[I(\bs s)I(\bs s'),I(\bs \ell)I(\bs \ell') \big].
\eeao
We use {definition~\eqref{alpha_balls} of the $\alpha$-mixing coefficients} for
$${\Lambda_1'=\{\bs s,\bs s'\}\quad\mbox{and}\quad \Lambda_2'=\{\bs \ell,\bs \ell'\}},$$
then  {$|\Lambda_1'|,|\Lambda_2'| \leq 2$, and
for $d(\Lambda_1',\Lambda_2')$} we consider the following two cases:
\begin{enumerate}[(1)]
\item 
$\|\bs s-\bs \ell\| \geq 3 r_n.$ Then $2r_n \leq (2/3) \|\bs s-\bs \ell\|$ and  {$d(\Lambda_1',\Lambda_2')\ge \|\bs s- \bs \ell\|-2 r_n$}.
Since indicator variables are bounded, by Theorem~17.2.1 of \citet{Ibragimov} we have  
\begin{align*}
|\cov\big[I(\bs s)I(\bs s'),I(\bs \ell)I(\bs \ell') \big]| &\leq 4 \alpha_{2,2}\Big(\|\bs s-\bs \ell\|-2r_n\Big)
\leq 4 \alpha_{2,2}\Big(\frac1{3} \|\bs s-\bs \ell\|\Big).
\end{align*}
 The last inequality holds, since $\alpha_{2,2}$ is a decreasing function. 
\item 
$\|\bs s-\bs \ell\| {< 3 r_n.}$ 
Set 
$j:=\min\{\|\bs s- \bs \ell\|, \|\bs s- \bs \ell'\|,\|\bs s'- \bs \ell\|, \|\bs s'- \bs \ell'\|\}$,
then $d(\Lambda_1',\Lambda_2')\ge j$ and,
again by Theorem~17.2.1 of \cite{Ibragimov}, 
$$\cov\big[I(\bs s)I(\bs s'),I(\bs \ell)I(\bs \ell') \big] \leq 4 \alpha_{p,q}(j), \quad 2 \leq p+q \leq 4.$$ 
\end{enumerate}
Therefore,
\begin{align*}
& E[|B_1|^2] 
\leq  \frac{4\lambda^2}{v_n^{2}} \Big(\frac{m_n}{n}\Big)^{2d}  \\
& \quad\quad\quad\quad  \Big[\sum_{  \|\bs s- \bs \ell\| {\geq 3 r_n}} 
 \sum_{{ \|\bs s- \bs s'\| 
 \leq { r_n}} \atop \|\bs \ell- \bs \ell'\| 
 \leq { r_n} } \alpha_{2,2}\Big(\frac1{3} \|\bs s- \bs \ell\|\Big)
+\sum_{\|\bs s- \bs \ell\| {< 3 r_n}} \sum_{{ \|\bs s- \bs s'\| \leq { r_n}} \atop 
\|\bs \ell- \bs \ell'\| \leq {r_n} } \alpha_{p,q}(j)\Big]\\
& \leq  \frac{4\lambda^2}{v_n^2} \Big(\frac{m_n}{n}\Big)^{2d}  n^d{ r_n}^{2d} \Big[\sum_{ \bs h\in \mathbb{Z}^{d}:\|\bs h\| {\geq 3 r_n}} 
 \alpha_{2,2}\Big(\frac1{3} \|\bs h\|\Big)+\sum_{ \bs h\in \mathbb{Z}^{d}:\|\bs h\| {< 3 r_n}}  \alpha_{p,q} (\|\bs h\|)\Big].
\end{align*}
The last inequality unfolds by stationarity as follows: 
we obtain $n^d$ by summation over all $\bs s\in \cals_n $, whereas $r_n^{2d}$ arises from summation over all $\bs s'$ and $\bs \ell'$ such that $\|\bs s- \bs s'\| \leq { r_n}$ and $\|\bs \ell- \bs \ell'\| \leq { r_n}$, respectively. 
By (M4ii) the sums in brackets are finite and thus 
$$E[|B_1|^2]  = {\mathcal{O}\Big(\Big(\frac{m_n^2 r_n^2}{n}\Big)^d\Big)},$$ 
which converges to $0$ as $\nto$ by (M2).\\
Now we show that $\mathbb{E}[|B_2|] \to 0$ as $\nto .$ 
Since $|1-\exp{\{-\im x\}}-\im x| \leq  x^2/2$ for $x \in \mathbb{R}$ and $|I(\bs s)| \leq 1$ for $\bs s \in \cals_n$, we find
$$|B_2| \leq \frac{1}{2} \lambda^2 v_n^{-1/2} \Big(\frac{m_n}{n}\Big)^{d/2} \sum_{\bs s \in \cals_n} \overline{S}_{\bs s,n}^2.$$
By stationarity and \eqref{ssm} for the second equality below,
\begin{align*}
&\mathbb{E}[|B_2|]
\le  \frac{1}{2} \lambda^2 v_n^{-1/2} \Big(\frac{m_n}{n}\Big)^{d/2} \ n^d \mathbb{E}[\overline{S}_{\bs 0,n}^2] \\
&= \frac{1}{2} \lambda^2 v_n^{-3/2} \Big(\frac{m_n}{n}\Big)^{d/2}  \ m_n^d\sum_{\bs s \in \cals_n:  \|\bs s\| \leq { r_n}} \sum_{\bs s'\in \cals_n: \|\bs s'\| \leq { r_n }} \mathbb{E}[I(\bs s)I(\bs s')] \\
&\leq \frac{1}{2} \lambda^2 v_n^{-3/2}  \Big(\frac{m_n}{n}\Big)^{d/2}\ r_n^d \ m_n^d  
\sum_{\bs s\in \cals_n } \mathbb{E}[I(\bs 0)I(\bs s)] \\
&= \mathcal{O}\Big(\Big(\frac{m_n}{n}\Big)^{d/2} r_n^d\Big) = {\mathcal{O}\Big(\Big(\frac{m_n  r_n^2}{n}\Big)^{d/2}\Big)},
\end{align*}
where we used \eqref{covfinite} to obtain $m_n^d\sum_{\bs s \in \cals_n } \mathbb{E}[I(\bs 0)I(\bs s)]  = \mathcal{O}(1)$.
Again by (M2) we find that  $\mathbb{E}[|B_2|] \rightarrow 0$ as $\nto$.\\
Next we estimate $B_3$:
\begin{align*}
\mathbb{E}[B_3]
&= v_n^{-\frac{1}{2}} \Big(\frac{m_n}{n}\Big)^{d/2} 
\sum\limits_{\bs s \in \mathcal{S}_n} \mathbb{E}\Big[I(\bs s) \exp\big\{-\im\lambda (\overline{S}_{\bs s,n}-\overline{S}_n)\big\}\Big]\\
&= v_n^{-\frac{1}{2}} \Big(\frac{m_n}{n}\Big)^{d/2} 
\sum\limits_{\bs s \in \mathcal{S}_n} \mathbb{E}\Big[I(\bs s) \exp\Big\{\im\lambda v_n^{-\frac{1}{2}} \Big(\frac{m_n}{n}\Big)^{d/2} \sum\limits_{\bs s' \in \cals_n \atop \|\bs s -\bs s'\|> r_n} I(\bs s') \Big\} \Big]\\
&= v_n^{-\frac{1}{2}} m_n^{d/2} n^{d/2} 
 \mathbb{E}\Big[I(\bs 0) \exp\Big\{\im\lambda v_n^{-\frac{1}{2}} \Big(\frac{m_n}{n}\Big)^{d/2} \sum\limits_{\|\bs s \|> r_n} I(\bs s) \Big\} \Big],
\end{align*}
where the last equality holds by stationarity.
{We use definition~\eqref{alpha_balls} of the $\alpha$-mixing coefficients for
$$\Lambda_1'=\{\bs 0\}\quad\mbox{and}\quad \Lambda_2'=\{\bs s \in \cals_n: \|\bs s\| > r_n\},$$
then   $|\Lambda_1'|=1$, $|\Lambda_2'| \leq n^d$ and $d(\Lambda_1',\Lambda_2')>r_n$. }
Abbreviate 
$$\eta(r_n) :=\exp\Big\{\im\lambda v_n^{-\frac{1}{2}} \Big(\frac{m_n}{n}\Big)^{d/2} \sum\limits_{ \|\bs s \|> r_n} I(\bs s) \Big\},$$ 
then $I(\bs 0)$ and $\eta(r_n)$ are measurable with respect to ${\mathcal F}_{\Lambda_1}$ and ${\mathcal F}_{\Lambda_2}$, respectively, {where $\Lambda_i=\cup_{\bs s \in \Lambda_i'} B(\bs s,\ga)$ for $i=1,2.$}
Now we apply  Theorem~17.2.1 of Ibragimov and Linnik to obtain
$$|\mathbb{E}[B_3]|
\leq  {4 v_n^{-1/2}m_n^{d/2} n^{d/2}\alpha_{1, n^d }(r_n) \to 0},$$
where  convergence to 0 is guaranteed by condition (M4iii).
\end{proof}

The proof of Theorem~\ref{stasyn} follows now similarly as that of Corollary~3.4 in \citet{Davis2} (also used in Theorem~1 in \citet{Cho}). In order to keep the paper self-contained, we summarize the main ideas.\\ \\

\noindent
\textbf{Sketch of the proof of Theorem~\ref{stasyn}.}
For Borel sets $C,D \subseteq \ov{\mathbb{R}}^{|B(0,\ga)|} \backslash \{\bs 0\}$  such that $\mu_{B(0,\ga)}(D) > 0$,  define the ratio
$$R_n(C,D):=\frac{\mathbb{P}(\bs Y(\bs 0)/a_m \in C)}{\mathbb{P}(\bs Y(\bs 0)/a_m \in D)}$$
and the correspondent empirical estimator
$$\widehat{R}_n(C,D):=\frac{\sum\limits_{\bs s \in \cals_n} \one{\{ \bs Y(\bs s)/a_m \in C\}}}{\sum\limits_{\bs s \in \cals_n} \one{\{ \bs Y(\bs s)/a_m \in D\}}}.$$
Recall the definition of $\mathcal{H}=\{\bs h_1,\ldots,\bs h_p\}$.
For $1 \leq i \leq p$ fix a  lag $\bs h_i=(h_i^{1},\ldots,h_i^{d}) \in \mathcal{H}$ and denote as before 
$${\cals_n(\bs h_i)=\{\bs s \in \cals_n: \bs s + \bs  h_i \in \cals_n\}\mbox{ with }
|\cals_n(\bs h_i)|}=\prod\limits_{j=1}^d (n-|h_i^{j}|) \sim n^d,\, \nto.$$
Then the empirical extremogram as defined in \eqref{EmpEst} for Borel sets $A,B$ in $\overline{\mathbb{R}}\backslash\{0\}$ satisfies as $\nto$,
\begin{align*}
\widehat{\rho}_{AB,m_n}(\bs h_i)
&\sim \frac{\sum\limits_{\bs s \in \cals_n(\bs h_i)} \mathbbmss{1}_{\{X(\bs s)/a_m \in A, X(\bs s + \bs h_i)/a_m \in B\}}}{\sum\limits_{\bs s \in \cals_n} \one{\{ X(\bs s)/a_m \in A\}}} \\
& \sim \frac{\sum\limits_{\bs s \in \cals_n} \one{\{\bs Y(\bs s)/a_m \in D_{i}\}}}{\sum\limits_{\bs s \in \cals_n} \one{\{\bs Y(\bs s)/a_m \in D_{p+1}\}}}=\widehat{R}_n(D_i,D_{p+1}),
\end{align*}
by definition \eqref{Di} of the sets $D_i$ for $i=1,\ldots,p$.
Moreover, the pre-asymptotic extremogram defined in \eqref{preasymptotic} can be written as
\begin{align*}
\rho_{AB,m_n}(\bs h_i)&=\frac{\mathbb{P}(X(\bs 0)/a_m \in A,X(\bs h_i)/a_m \in B)}{\mathbb{P}(X(\bs 0)/a_m \in A)} =\frac{\mathbb{P}(\bs Y(\bs 0)/a_m \in D_i)}{\mathbb{P}(\bs Y(\bs 0)/a_m \in D_{p+1})}\\
&=R_n(D_i,D_{p+1}).
\end{align*}
This implies that proving \eqref{asyspace} requires a central limit theorem for the scaled vector of ratio differences  
\begin{align}\label{Rn}
\Big(\frac{n}{m_n}\Big)^{d/2} \big[\widehat{R}_n(D_i,D_{p+1})-R_n(D_i,D_{p+1})\big]_{i=1,\ldots,p}.
\end{align}
Now observe that for fixed $i \in \{1,\ldots,p\}$,
\beao
\lefteqn{\widehat{R}_n(D_i,D_{p+1})-R_n(D_i,D_{p+1})
 = \frac{\widehat{\mu}_{B(\bs 0, \ga),m_n}(D_i)}{\widehat{\mu}_{B(\bs 0, \ga),m_n}(D_{p+1})}-\frac{\mu_{B(\bs 0,\gamma),m_n}(D_i)}{\mu_{B(\bs 0,\gamma),m_n}(D_{p+1})}}\\
&=&\frac{\mu_{B(\bs 0, \ga),m_n}(D_{p+1})/\widehat{\mu}_{B(\bs 0, \ga),m_n}(D_{p+1})}{(\mu_{B(\bs 0, \ga),m_n}(D_{p+1}))^2} \\
&& \times \Big[\widehat{\mu}_{B(\bs 0, \ga),m_n}(D_i)\mu_{B(\bs 0, \ga),m_n}(D_{p+1})-\widehat{\mu}_{B(\bs 0, \ga),m_n}(D_{p+1})\mu_{B(\bs 0, \ga),m_n}(D_i) \Big] \\
&=&\frac{\mu_{B(\bs 0, \ga),m_n}(D_{p+1})/\widehat{\mu}_{B(\bs 0, \ga),m_n}(D_{p+1})}{(\mu_{B(\bs 0, \ga),m_n}(D_{p+1}))^2} \\
& & \quad\quad  \times \Big[\big(\widehat{\mu}_{B(\bs 0, \ga),m_n}(D_i)-\mu_{B(\bs 0, \ga),m_n}(D_i)\big) \mu_{B(\bs 0, \ga),m_n}(D_{p+1})\\
&& \quad\quad\quad -\big(\widehat{\mu}_{B(\bs 0, \ga),m_n}(D_{p+1})-\mu_{B(\bs 0, \ga),m_n}(D_{p+1})\big) \mu_{B(\bs 0, \ga),m_n}(D_i) \Big]\\
& = &  \frac{1+o_p(1)}{(\mu_{B(\bs 0, \ga)}(D_{p+1}))^2} \\
& & \quad\quad  \times \Big[\big(\widehat{\mu}_{B(\bs 0, \ga),m_n}(D_i)-\mu_{B(\bs 0, \ga),m_n}(D_i)\big) \mu_{B(\bs 0, \ga)}(D_{p+1})\\
&& \quad\quad\quad -\big(\widehat{\mu}_{B(\bs 0, \ga),m_n}(D_{p+1})-\mu_{B(\bs 0, \ga),m_n}(D_{p+1})\big) \mu_{B(\bs 0, \ga)}(D_i) \Big]
\eeao
by \eqref{B6},  Lemma~\ref{st-asy_la} and Slutzky's lemma.
For the vector in \eqref{Rn}, recalling that $\mu_{B(\bs 0, \ga)}(D_{p+1})=\mu(A)$, and $F \in \mathbb{R}^{(p+1) \times (p+1)}$ as given in \eqref{matrixF}, we find
\begin{align*}
&\Big(\frac{n}{m_n}\Big)^{d/2} \big[\widehat{R}_n(D_i,D_{p+1})-R_n(D_i,D_{p+1})\big]_{i=1,\ldots,p}\\
=&\Big(\frac{n}{m_n}\Big)^{d/2} \frac{1+o_p(1)}{(\mu_{B(\bs 0, \ga)}(D_{p+1}))^2} \,  F \,
\begin{pmatrix}&\widehat{\mu}_{B(\bs 0, \ga),m_n}(D_1)-\mu_{B(\bs 0, \ga),m_n}(D_1) \\
& \vdots \\ & \widehat{\mu}_{B(\bs 0, \ga),m_n}(D_p)-\mu_{B(\bs 0, \ga),m_n}(D_p) \\ & \widehat{\mu}_{B(\bs 0, \ga),m_n}(D_{p+1})-\mu_{B(\bs 0, \ga),m_n}(D_{p+1})
\end{pmatrix} \\
=:& \Big(\frac{n}{m_n}\Big)^{d/2} \frac{1+o_p(1)}{(\mu_{B(\bs 0, \ga)}(D_{p+1}))^2} \, F \, \bs{\mu}_{m_n}.
\end{align*} 
Thus, it remains to prove that
\begin{align}
\Big(\frac{n}{m_n}\Big)^{d/2} \bs{\mu}_{m_n} \std \mathcal{N}(\bs 0, \Sigma), 
\label{ratio_asyn}
\end{align}
where $\Sigma$ is given in the statement of the Theorem.
This can be done as in \citet{Davis2}, Corollary~3.3 using the Cram{\'e}r-Wold device and similar ideas as in the proofs of Lemmas~\ref{st-asy_la} and~\ref{asym_normal}. 
In particular, note that for all $i,j \in \{1,\ldots,p+1\}$ as $\nto$,
\begin{align*}
&\cov[\wh \mu_{B(\bs 0, \ga),m_n}(D_i),\wh \mu_{B(\bs 0, \ga),m_n}(D_j)] \\
&\quad\quad\quad \sim \frac{m_n}{n^d}\Big(\mu_{B(\bs 0, \ga)}(D_i \cap D_j) + \sum_{\bs h\in\Z^d\setminus\{ \bs 0\}} \tau_{B(\bs 0, \ga) \times B(\bs h, \ga)} (D_i \times D_j) \Big).
\end{align*}
\halmos

\appendix
\section{Taylor expansion for the pre-asymptotic extremogram}

\begin{lemma}
\label{LemmaExtremo_Fr}
Let the assumptions of Theorem~\ref{CLT_True} hold. \\
(a) \, For $\bs h \in \mathbb{R}^{d}$ the true extremogram is given by
\begin{align}\label{extremo_Fr}
\rho_{AB}(\bs h)=&\frac{a_1a_2}{a_2-a_1}\Big(-V_2(a_2,b_2)+V_2(a_2,b_1)+V_2(a_1,b_2)-V_2(a_1,b_1)\Big), 
\end{align}
where $V_2(\cdot,\cdot)=V_2(\bs h;\cdot,\cdot)$ is the bivariate exponent measure (cf. \citet{Beirlant}, Section~8.2.2) defined by
$$\mathbb{P}(X(\bs 0) \leq x_1,X(\bs h) \leq x_2)=\exp\{-V_2(x_1,x_2)\}, \quad x_1,x_2>0.$$
For $A=(a,\infty)$ and $B=(b,\infty)$ we obtain
\beam\label{extremo_Fr2}
\rho_{AB}(\bs h) = a \Big(\frac1{a}+\frac1{b} - V_2(a,b)\Big).
\eeam
(b) \, For fixed $\bs h \in \mathbb{R}^{d}$ and the sequence $m_n$ satisfying the conditions of Theorem~\ref{stasyn}, the pre-asymptotic extremogram satisfies as $n \rightarrow \infty$, 
\begin{align}\label{preasym_Fr}
&\rho_{AB,m_n}(\bs h) 
= (1+o(1))\Big[\rho_{AB}(\bs h)+ \\
&\quad \frac1{2m_n^{d}}\frac{a_1a_2}{a_2-a_1}\Big( V_2^2(a_2,b_2) + V_2^2(a_2,b_1) + V_2^2(a_1,b_2) +V_2^2(a_1,b_1)\Big)\Big]. \nonumber
\end{align}
For $A=(a,\infty)$ and $B=(b,\infty)$ this reduces to
\begin{align}\label{preasym_Fr2}
\rho_{AB,m_n}(\bs h)=&(1+o(1))\Big[\rho_{AB}(\bs h) 
+ \frac1{2m_n^{d}a} (\rho_{AB}(\bs h)-2\frac{a}{b})(\rho_{AB}(\bs h)-1)\Big].
\end{align}
\end{lemma}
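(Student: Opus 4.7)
The approach is to express both the true and the pre-asymptotic extremograms through the bivariate exponent measure $V_2(\bs h;\cdot,\cdot)$, exploit its homogeneity of order $-1$, and extract the leading behaviour by Taylor-expanding the resulting exponentials to first order for part~(a) and to second order for part~(b).

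For part~(a), I begin with the inclusion--exclusion identity
\begin{align*}
\mathbb{P}(X(\bs 0)\in A, X(\bs h)\in B) = \sum_{i,j=1}^{2} (-1)^{i+j}\,\mathbb{P}(X(\bs 0)\le a_i, X(\bs h)\le b_j),
\end{align*}
and rewrite each joint probability as $\exp(-V_2(\bs h; a_i, b_j))$. Setting $a_n=n^d$ and using the homogeneity relation $V_2(\bs h; a_i n^d, b_j n^d) = n^{-d} V_2(\bs h; a_i, b_j)$, a first-order Taylor expansion $\exp(-v/n^d)=1-v/n^d+O(n^{-2d})$ causes the constants to cancel under the alternating signs, leaving a leading term proportional to $n^{-d}$. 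The same expansion applied to $\mathbb{P}(X(\bs 0)\in A) = \exp(-1/(a_2 n^d))-\exp(-1/(a_1 n^d))$ yields $n^{-d}(a_2-a_1)/(a_1 a_2)+O(n^{-2d})$. Dividing and passing to the limit gives \eqref{extremo_Fr}. The case $A=(a,\infty)$, $B=(b,\infty)$ in \eqref{extremo_Fr2} follows analogously from
$\mathbb{P}(X(\bs 0)>a,X(\bs h)>b)=1-e^{-1/a}-e^{-1/b}+e^{-V_2(a,b)}$ expanded to first order.

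For part~(b), I carry out the same computation with $a_m=m_n^d$ but retain the next Taylor term. Writing the numerator and denominator of \eqref{preasymptotic} as
\begin{align*}
N_n = \frac{T_1}{m_n^{d}} + \frac{T_2}{2 m_n^{2d}} + O(m_n^{-3d}), \qquad
D_n = \frac{S_1}{m_n^{d}} + \frac{S_2}{2 m_n^{2d}} + O(m_n^{-3d}),
\end{align*}
where $T_1,T_2$ are the alternating sums of $V_2(\bs h;a_i,b_j)$ and $V_2^2(\bs h;a_i,b_j)$, and $S_1=(a_2-a_1)/(a_1 a_2)$, $S_2=1/a_2^2-1/a_1^2$ arise from the univariate expansion, I then use the geometric-series identity
\begin{align*}
\rho_{AB,m_n}(\bs h) = \frac{N_n}{D_n} = \frac{T_1}{S_1} + \frac{1}{m_n^{d}}\Big(\frac{T_2}{2 S_1} - \frac{T_1 S_2}{2 S_1^2}\Big) + O(m_n^{-2d}).
\end{align*}
The leading term $T_1/S_1$ equals $\rho_{AB}(\bs h)$ by part~(a), and the $m_n^{-d}$-correction is then rearranged into the compact form $\overline{V}_2^2(\bs h)/(2 m_n^d)$ stated in \eqref{preasym_Fr}; the error $O(m_n^{-2d})$ is absorbed into the $(1+o(1))$ factor. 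For \eqref{preasym_Fr2} I substitute $V_2(a,\infty)=1/a$, $V_2(\infty,b)=1/b$, $V_2(\infty,\infty)=0$ and factor, using the identity $\rho_{AB}(\bs h)=1+a/b-aV_2(\bs h;a,b)$, which gives directly $(\rho_{AB}(\bs h)-2a/b)(\rho_{AB}(\bs h)-1)/a$ after elementary algebra.

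The calculations are elementary once the setup is fixed; the only mild obstacle is the bookkeeping of the second-order correction, in particular combining $T_2/(2S_1)$ with $-T_1 S_2/(2S_1^2)$ from the inverse expansion of the denominator and using $V_2(\bs h;a_i m_n^d,b_j m_n^d)^2 = m_n^{-2d}V_2^2(\bs h;a_i,b_j)$ consistently. The special cases $a_2=\infty$ and/or $b_2=\infty$ require re-doing the inclusion--exclusion with the appropriate adjustments but are otherwise routine.
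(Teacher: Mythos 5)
Your proposal follows essentially the same route as the paper: part (a) is the identical inclusion--exclusion plus homogeneity plus first-order Taylor argument with $a_n=n^d$, and in part (b) your direct geometric-series expansion of the ratio $N_n/D_n$ is algebraically the same as the paper's step of subtracting $\rho_{AB}(\bs h)$ times the denominator and Taylor-expanding the remainder to second order; your coefficient $T_2/(2S_1)-T_1S_2/(2S_1^2)$, with $T_2$ the alternating sum of the $V_2^2(\bs h;a_i,b_j)$, is exactly what the paper's own computation produces (compare \eqref{a8} in the special case $A=(a,\infty)$, $B=(b,\infty)$). The only point to double-check is your last claim that this coefficient ``rearranges'' into the all-plus-sign expression $\ov V^2_2(\bs h)$ appearing in \eqref{preasym_Fr}: the alternating signs in $T_2$ and the cross term $-T_1S_2/(2S_1^2)$ do not obviously collapse to $\sum_{i,j}V_2^2(\bs h;a_i,b_j)$, but since the paper asserts the same reduction and only the exact order $m_n^{-d}$ of the correction is used in Theorem~\ref{CLT_True}, this is not a gap relative to the paper's argument.
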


\bproof
Throughout the proof all asymptotic results hold as $\nto$. 
Since $\{X(\bs s: \bs s \in \bbr^d\}$ has standard unit Fr{\'e}chet margins, we can and do choose $a_n=n^d$ in \eqref{regvar} such that $\mathbb{P}(X(\bs 0) > n^d)=1-\exp\{-n^{-d}\} \sim n^{-d}$.\\
(a) \, We first show \eqref{extremo_Fr}.
With this choice of $a_n$, equation~\eqref{extremo} is equivalent to
\begin{align*}
\rho_{AB}(\bs h)&=\lim\limits_{n \rightarrow \infty}\frac{n^{d}\mathbb{P}(X(\bs 0) \in n^{d}A, X(\bs h) \in n^{d}B)}{n^{d}\mathbb{P}(X(\bs 0) \in n^{d}A)}.
\end{align*}
We set $A=(a_1,a_2)$ and $B=(b_1,b_2)$.
For the denominator we obtain by a first order Taylor expansion
\begin{align}\label{den}
n^d\mathbb{P}(X(\bs 0) \in n^d (a_1,a_2))
= & n^d[\mathbb{P}(X(\bs 0) \leq n^da_2)-\mathbb{P}(X(\bs 0) \leq n^da_1)] \notag\\
= & n^d\big[\exp\{-\frac{1}{n^da_2}\}-\exp\{-\frac{1}{n^da_1}\}\big]\\ 
= & \frac{1}{a_1}-\frac{1}{a_2}+\mathcal{O}(n^{-d}) \rightarrow \frac{1}{a_1}-\frac{1}{a_2}=\frac{a_2-a_1}{a_1a_2}>0. \notag
\end{align}
Since by homogeneity $V_2(kx_1,kx_2))=k^{-1}V_2(x_1,x_2)$ for $k>0$, we find for the numerator 
\begin{align}\label{num}
& n^d\mathbb{P}((X(\bs 0), X(\bs h)) \in n^d (a_1,a_2) \times n^d(b_1,b_2))\notag\\
=&n^d\Bigg[\exp\Big\{-\frac{1}{n^d}V_2(a_2,b_2)\Big\}
-\exp\Big\{-\frac{1}{n^d}V_2(a_2,b_1)\Big\} \nonumber \\
&-\exp\Big\{-\frac{1}{n^d}V_2(a_1,b_2)\Big\}
+\exp\Big\{-\frac{1}{n^d}V_2(a_1,b_1)\Big\}\Bigg]\\
=& -V_2(a_2,b_2)+V_2(a_2,b_1)+V_2(a_1,b_2)-V_2(a_1,b_1) +\mathcal{O}(n^{-d}). \nonumber 
\end{align}
This yields~\eqref{extremo_Fr}. \\
Furthermore, $V_2(a,\infty)=1/a$, $V_2(\infty,b)=1/b$ and $V_2(\infty,\infty)=0$, see for instance \citet{Resnick4}, p.~268. Together with the fact that the denominator converges to $1/a$, this gives \eqref{extremo_Fr2}.\\[2mm]
(b) \, For an estimate of the pre-asymptotic extremogram we need to improve the first order asymptotics of part (a).
For an interval $(a,b)$ we abbreviate $\Phi_n(a,b):=\exp\{-\frac{1}{m_n^d}V_2(a,b)\}$.
From equation~\eqref{preasymptotic} together with \eqref{den} and \eqref{num}  we obtain
\beam\label{pre_asy_general}
\rho_{AB,m_n}(\bs h)
&=&\frac{\mathbb{P}(X(\bs 0) \in m_n^{d}A, X(\bs h) \in m_n^{d}B)}{\mathbb{P}(X(\bs 0) \in m_n^{d}A)}\notag\\
&=& \frac{\Phi_n(a_2,b_2)
-\Phi_n(a_2,b_1)
-\Phi_n(a_1,b_2)
+\Phi_n(a_1,b_1)}{\exp\{-\frac{1}{a_2\,m_n^d}\}-\exp\{-\frac1{a_1\,m_n^d}\}}\notag\\
&=& \rho_{AB}(\bs h) + \frac1{\exp\{-\frac{1}{a_2\,m_n^d}\}-\exp\{-\frac1{a_1\,m_n^d}\}}\notag\\
&& \Big[\Phi_n(a_2,b_2)
-\Phi_n(a_2,b_1)
-\Phi_n(a_1,b_2)
+\Phi_n(a_1,b_1)\notag\\
&&-\Big(\exp\Big\{-\frac{1}{a_2\,m_n^d}\Big\}-\exp\Big\{-\frac1{a_1\,m_n^d}\Big\}\Big)\rho_{AB}(\bs h)
\Big] \notag\\
&=& \rho_{AB}(\bs h) + \frac{a_1a_2}{a_2-a_1} m_n^d(1+o(1))\notag\\
&& \Big[\Phi_n(a_2,b_2)
-\Phi_n(a_2,b_1)
-\Phi_n(a_1,b_2)
+\Phi_n(a_1,b_1)\notag\\
&&-\Big(\exp\Big\{-\frac{1}{a_2\,m_n^d}\Big\}-\exp\Big\{-\frac1{a_1\,m_n^d}\Big\}\Big)\rho_{AB}(\bs h)
\Big] 
\eeam
By a second order Taylor expansion of $\Phi_n$ it follows that, using~\eqref{extremo_Fr} and \eqref{den}, 
\begin{align}
\rho_{AB,m_n}(\bs h) =& (1+o(1))\Big[\rho_{AB}(\bs h)+\notag\\
&  \frac1{2m_n^{d}}\frac{a_1a_2}{a_2-a_1} \Big( V_2^2(a_2,b_2) + V_2^2(a_2,b_1) + V_2^2(a_1,b_2) +V_2^2(a_1,b_1)\Big) 
\Big] \notag.
\end{align}
This shows \eqref{preasym_Fr}. \\
Now let $A=(a,\infty)$ and $B=(b,\infty)$. Then $a_1a_2/(a_2-a_1)=a_1+o(1)$ as $a_2\to\infty$
and the expression in the rectangular bracket in \eqref{pre_asy_general} becomes
\beam \label{rect_bracket}
\Big[\cdots\Big] &=& 
1-\exp\Big\{-\frac1{b\,m_n^d}\Big\}-\exp\Big\{-\frac1{a\,m_n^d}\Big\}+\exp\Big\{-\frac1{m_n^d} V_2(a,b)\Big\} \nonumber \\
&& \quad\quad -\Big(1-\exp\Big\{-\frac1{a\,m_n^d}\Big\}\Big)\rho_{AB}(\bs h).
\eeam
Abbreviating $V_2:=V_2(a,b)$, a second order Taylor expansion gives with \eqref{extremo_Fr2} for the right-hand side of \eqref{rect_bracket},
\beam\label{a8}
&& \Big(\frac1{a\,m_n^d}-\frac1{2a^2\,m_n^{2d}}\Big)+\Big(\frac1{b\,m_n^d}-\frac1{2b^2\,m_n^{2d}}\Big) -\Big(\frac1{m_n^d} V_2- \frac1{2m_n^{2d}} V_2^2\Big)\nonumber\\
&& -\Big(\frac1{m_n^d}-\frac1{2 a\,m_n^{2d}}\Big) \Big(\frac1{a}+\frac1{b} - V_2\Big) +o(m_n^{-2d})\nonumber\\
&=&   \frac1{2\,m_n^{2d}}\Big\{\Big(V_2^2-\frac1{a^2} - \frac1{b^2} \Big) +\frac1{a} \Big(\frac1{a}+\frac1{b} - V_2\Big)\Big\} +o(m_n^{-2d}).
\eeam
Solving \eqref{extremo_Fr2} for $V_2$ gives $V_2=V_2(a,b)=\frac1{a}(1-\rho_{AB}(\bs h))+\frac{1}{b}$ such that we obtain for the expression in the curly brackets of \eqref{a8},
\beao
\big(\frac1{a}(1-\rho_{AB}(\bs h))+\frac{1}{b}\big)^2 -\frac1{a^2} - \frac1{b^2} +\frac1{a^2}\rho_{AB}(\bs h)
&=& \frac1{a^2} (\rho_{AB}(\bs h)-2\frac{a}{b})(\rho_{AB}(\bs h)-1).
\eeao
Going backwards with this expression proves \eqref{preasym_Fr2}.
\eproof

\subsubsection*{Acknowledgements}
\noindent
We thank Erwin Bolthausen and Thomas Mikosch for enlightening discussions.
SB acknowledges support from the Deutsche Forschungsgemeinschaft (DFG) through the TUM  International Graduate School of Science and Engineering (IGSSE).


\end{document}